  \newcommand{\field}[1]{\mathbb{#1}}
  \newcommand{\cbb}{\field{C}}
  \newcommand{\C}{{\mathbb C}}
  \newcommand{\I}{\,\mathrm{i}\,}
  \newcommand{\PsDO}{\Psi\mathrm{DO}}
  \newcommand{\beq}{\begin{equation}}
  \newcommand{\eeq}{\end{equation}}
  \newtheorem{theorem}{Theorem}[section]
  \newtheorem{pro}[theorem]{Proposition}
  \newtheorem{cor}[theorem]{Corollary}
  \newtheorem{rem}[theorem]{Remark}
\begin{document}

\title[Spectral Properties of K\"ahler manifolds]
{On the Spectrum of geometric operators on K\"ahler manifolds}

\author[D. Jakobson]{Dmitry Jakobson}
\address{Department of Mathematics and Statistics, McGill
University, 805 Sherbrooke Str. West, Montr\'eal QC H3A 2K6,
Ca\-na\-da.} \email{jakobson@math.mcgill.ca}

\author[A. Strohmaier]{Alexander Strohmaier}
\address{Department of Mathematical Sciences,  Loughborough University,  Loughborough, Leicestershire, LE11 3TU,
UK} \email{a.strohmaier@lboro.ac.uk}

\author[S. Zelditch]{Steve Zelditch}
\address{Johns Hopkins University, Department of Mathematics, 404 Krieger
Hall, 3400 N. Charles Street, Baltimore, MD 21218}
\email{zelditch@math.jhu.edu}

\keywords{Dirac operator, eigenfunction, frame flow, quantum
ergodicity, K\"ahler manifold}

\subjclass[2000]{Primary: 81Q50 Secondary: 35P20, 37D30, 58J50,
81Q005}

\date{\today}

\thanks{The first author was supported by NSERC, FQRNT and Dawson fellowship.}

\maketitle

\begin{abstract}
 On a compact K\"ahler manifold there is a canonical action
 of a Lie-superalgebra on the space of differential forms. It is generated
 by the differentials, the Lefschetz operator and the adjoints of these operators.
 We determine the asymptotic distribution of irreducible representations
 of this Lie-superalgebra on the eigen\-spaces of the Laplace-Beltrami operator.
 Because of the high degree of symmetry the Laplace-Beltrami operator on forms
 can not be quantum ergodic. We show that after taking these symmetries into
 account quantum ergodicity holds for the Laplace-Beltrami operator and
 for the Spin$^{\cbb}$-Dirac operators if the unitary frame flow is ergodic. The
 assumptions for our theorem are known to be satisfied
 for instance for negatively curved K\"ahler manifolds of odd complex dimension.
\end{abstract}


\section{Introduction}

 Properties of the spectrum of the Laplace-Beltrami operator on a manifold are closely related
 to the properties of the underlying classical dynamical system. For
 example ergodicity of the geodesic flow on the unit tangent bundle $T_1 X$ of a compact
 Riemannian manifold $X$ implies quantum ergodicity.
 Namely, for any complete orthonormal sequence of eigenfunctions $\phi_j \in
 L^2(X)$ to the Laplace operator $\Delta$ with eigenvalues $\lambda_j \nearrow
\infty$ one has (see \cite{Shn74,Shn93,Zel87,CdV,HMR})
\begin{gather}
 \lim_{N \to \infty} \frac{1}{N} \sum_{j \leq N} | \langle \phi_j,
 A \phi_j \rangle - \int_{T_1^*X} \sigma_A(\xi) d \mu_L(\xi)|^2 =0,
\end{gather}
for any zero order pseudodifferential operator $A$,
where integration is with respect to the normalized Liouville measure $\mu_L$ on the
unit-cotangent bundle $T_1^* X$, and $\sigma_A$ is the principal symbol of
$A$. Quantum ergodicity is equivalent to the existence of a subsequence
$\phi_{j_k}$ of counting density one such that
\begin{gather}
 \lim_{k \to \infty} \langle \phi_{j_k}, A \phi_{j_k} \rangle = \int_{T_1^*X} \sigma_A(\xi) d \mu_L(\xi).
\end{gather}
In particular, $A$ might be a smooth function on $X$ and the above implies that
the sequence
\begin{gather}
  |\phi_{j_k}(x)|^2 dV_g
\end{gather}
converges to the normalized Riemannian measure $d V_g$ in the weak topology
of measures. For bundle-valued geometric operators like the Dirac
operator acting on sections of a spinor bundle or the
Laplace-Beltrami operator the corresponding Quantum ergodicity for
eigensections is known in a precise way to relate to the ergodicity
of the frame flow on the corresponding manifold \cite{JS}; see also
\cite{BG04.1,BG04.2,BO06}.

This paper deals with a situation in which the frame flow is not ergodic,
namely the case of K\"ahler manifolds. In this case the conclusions in \cite{JS}
do not hold since there is a huge symmetry algebra acting on the space of
differential forms. This algebra is the universal enveloping algebra of a
certain Lie superalgebra that is generated by the Lefschetz operator, the
complex differentials and their adjoints. On the level of harmonic forms this symmetry
is responsible for the rich structure of the cohomology of K\"ahler manifolds
and can be seen as the main ingredient for the Lefschetz theorems. Here we are
interested in eigensections with non-zero eigenvalues, that is in the spectrum
of the Laplace-Beltrami operator acting on
the orthogonal complement of the space of harmonic forms. The action of the
Lie superalgebra on the orthogonal complement of the space of harmonic forms
is much more complicated than on the space of harmonic forms  where it
basically becomes the action of $sl_2(\cbb)$. In this paper we classify
all finite dimensional unitary representations of this algebra and determine
the asymptotic distribution of these representations in the eigenspaces.
Since the typical irreducible representation of the algebra decomposes into
four irreducible representation for $sl_2(\cbb)$ this shows that
eigenspaces to the Laplace-Beltrami operator have multiplicities.
An important observation in our treatment is that the universal enveloping algebra of this
Lie superalgebra is
generated by two commuting subalgebras, one of which is isomorphic to the universal
enveloping algebra of $sl_2(\cbb)$. This $sl_2(\cbb)$-action is generated
by an operator $L_t$ and its adjoint $L_t^*$ which is going to be defined in
section \ref{tlef}. This operator can be interpreted as the Lefschetz operator
in the directions of the frame bundle which are orthogonal to the frame flow.
However, $L_t$ is not an endomorphism of vector bundles, but it acts as a
pseudodifferential operator
of order zero.

Guided by this result we tackle the question of quantum ergodicity for the
Laplace-Beltrami operator on $(p,q)$-forms. Unlike in the case of ergodic
frame flow it turns out that there might be different quantum limits of
eigensections on the space of co-closed $(p,q)$-forms because of the presence
of the Lefschetz operator. Our main results establishes quantum ergodicity
for the Dirac operator and the Laplace Beltrami operator if one takes the
Lefschetz symmetry into account and under the assumption that the $U(m)$-frame
flow is ergodic. For example our analysis shows that in case of an
ergodic $U(m)$-frame flow for any
complete sequence of co-closed primitive $(p,q)$-forms there is a density
one subsequence which converges to a state which is an extension of the
Liouville measure and can be explicitly given.
For the Spin$^\cbb$-Dirac operators we show that quantum ergodicity
does not hold for K\"ahler manifolds of complex dimension greater than one.
Thus, negatively curved
Spin-K\"ahler manifolds provide examples of manifolds with ergodic geodesic flow
where quantum ergodicity does not hold for the Dirac operator.
Our analysis shows that there are certain invariant subspaces for the Dirac operator
in this case and we prove quantum ergodicity for the Dirac operator restricted to these subspaces
provided that the $U(m)$-frame flow is ergodic.

\section{K\"ahler manifolds}

Let $(X,\omega,J)$ be a K\"ahler manifold of real dimension $n=2m$. Let $g$ be the metric,
$h=g + \I \omega$ be the hermitian metric, and $\omega$ the symplectic form.
As usual let $J$ be the complex structure.
A $k$-frame $(e_1,\ldots,e_k)$ for the cotangent space at some point $x \in X$
is called unitary if it is unitary with respect to the hermitian inner product
induced by $h$. Hence, a $k$-frame $(e_1,\ldots,e_k)$ is unitary iff
$(e_1,J e_1,e_2,J e_2,\ldots,e_k,J e_k)$ is orthonormal with respect to $g$.
A unitary $m$-frame at a point $x \in X$ is an ordered orthonormal basis
for $T_x^* X$ viewed as a complex vector space.

Clearly, the group $U(m)$
acts freely and transitively on the set of unitary $m$-frames. The bundle
$U_m X$ of unitary $m$-frames is therefore a $U(m)$-principal fiber bundle.
Let $T_1^* X$ be the unit cotangent bundle with bundle projection $\pi$.
Then projection onto the first vector makes $U_m X$ a principal $U(m-1)$-bundle
over $T_1^* X$.
{\footnotesize
$$
\begindc{0}[7]
  \obj(10,15)[A]{$U_m X$}
  \obj(40,15)[B]{$X$}
  \obj(25,15)[C]{$T_1^* X$}
  \mor{A}{C}{$U(m-1)$}
  \mor{C}{B}{$S^{2m-1}$}
  \cmor((10,13)(11,11)(15,10)(25,10)(35,10)(39,11)(40,13)) \pup(25,8){$U(m)$}
\enddc
$$
}
Transporting covectors parallel with respect to the Levi-Civita
connection extends the Hamiltonian flow on $T_1^* X$ to a flow on $U_m X$
which we call the $U(m)$-frame flow (in the literature it is also referred to
as the restricted frame flow). This is indeed a flow on $U_m X$
since $J$ is covariantly constant and therefore unitary frames are parallel
transported into unitary ones. This flow is the appropriate replacement for
the $SO(2m)$-frame flow for K\"ahler manifolds as it can be shown to be ergodic
in some cases, whereas the $SO(2m)$-frame flow never is ergodic for K\"ahler manifolds

Suppose that $X$ is a negatively-curved K\"ahler manifold
\cite{Borel}.  We summarize results that can be found in \cite{Br82,
BrG, BrP}. We refer the reader to \cite{BuP, JS, Br82} and
references therein for discussion of frame flows on general
negatively-curved manifolds. Note that the frame flow is {\em not}
ergodic on negatively-curved K\"ahler manifolds, since the almost
complex structure $J$ is preserved. This is the only known example
in negative curvature when the geodesic flow is ergodic, but the
frame flow is not. In fact, given an orthonormal $k$-frame
$(e_1,\ldots,e_k)$, the functions $(e_i,J e_j), 1\leq i,j\leq k$ are
first integrals of the frame flow.

However, the following proposition was proved in \cite{BrG}:
\begin{pro}\label{erg:flow}
Let $X$ be a compact negatively-curved K\"ahler manifold of {\em
complex} dimension $m$.  Then the $U(m)$-frame flow is ergodic on
$U_m X$ when $m=2$, or when $m$ is odd.
\end{pro}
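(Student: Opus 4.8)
The plan is to follow Brin and Gromov \cite{BrG}. Since $X$ is compact with negative sectional curvature, the Hamiltonian (geodesic) flow on $T_1^* X$ is Anosov, hence ergodic, and its strong stable and unstable foliations are absolutely continuous. The $U(m)$-frame flow is a principal $U(m-1)$-extension of this flow, i.e., a compact group extension of an Anosov flow, so Brin's theory applies: it produces a closed subgroup $H\subseteq U(m-1)$, the \emph{transitivity group}, generated by the holonomies of the extension around loops that run alternately along strong stable and strong unstable leaves, such that (after a measurable conjugacy) the extension reduces to an $H$-extension which is ergodic. Consequently the $U(m)$-frame flow is ergodic if and only if $H = U(m-1)$ --- which is what must be shown.

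The core of the argument is to prove that $H$ acts transitively on the unit sphere $S^{2m-3}$ of $\cbb^{m-1}$, i.e., on the fibre directions transverse to the geodesic direction. Here negative curvature is used in an essential way: stable and unstable Jacobi fields are nowhere collinear and span complementary subspaces, so parallel transport of unitary frames along suitably chosen stable/unstable ``bubbles'' mixes the transverse $\cbb^{m-1}$ strongly, and Brin's construction then yields, at almost every base point, a holonomy orbit that exhausts $S^{2m-3}$. I expect this to be the main obstacle: it rests on the absolute continuity of the invariant foliations together with a delicate geometric choice of bubbles, and it is exactly the step where negative curvature --- rather than mere ergodicity of the geodesic flow --- is indispensable.

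Granting transitivity on $S^{2m-3}$, the remaining step is algebraic. By the Montgomery--Samelson--Borel classification of compact connected groups acting transitively on spheres, the closed connected subgroups of $U(m-1)$ transitive on $S^{2m-3}$ are $U(m-1)$, $SU(m-1)$, and, when $m-1$ is even, $Sp(\tfrac{m-1}{2})$ and $Sp(\tfrac{m-1}{2})\cdot U(1)$; no exceptional case occurs, since the transitive sphere actions of $\Spin(7)$ and $\Spin(9)$ carry no invariant complex structure and $G_2$ does not act transitively on an odd-dimensional sphere. One must therefore exclude the proper candidates, and this is precisely where the hypothesis ``$m=2$ or $m$ odd'' is used. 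For $m=2$ nothing further is needed, since $U(m-1)=U(1)$ has no proper closed subgroup transitive on $S^1$. For general $m$ one rules out $SU(m-1)$ --- and with it the symplectic group $Sp(\tfrac{m-1}{2})\subset SU(m-1)$ --- by showing that the action of the frame flow on $\Lambda^{m-1}$ of the transverse complex $(m-1)$-plane bundle over $T_1^* X$ defines a determinant cocycle that is not a coboundary; by the Liv\v{s}ic theorem this reduces to non-triviality of the associated holonomies around closed geodesics, which in turn follows from the non-vanishing of the Ricci form (forced by negative Ricci curvature) together with a cohomological computation on $T_1^* X$ whose outcome depends on the parity of $m$. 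When $m$ is odd one must in addition exclude $Sp(\tfrac{m-1}{2})\cdot U(1)$: if $H$ were contained in it, the resulting $H$-reduction of the transverse bundle would, again via Liv\v{s}ic and density of closed geodesics, force the Riemannian holonomy of $X$ to reduce to a subgroup of $Sp(\tfrac{m-1}{2})\cdot U(1)$, which is impossible for a negatively curved K\"ahler manifold by Berger's classification of holonomy groups. Together these exclusions give $H = U(m-1)$, and hence the ergodicity of the $U(m)$-frame flow.
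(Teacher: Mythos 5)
The paper does not actually prove Proposition \ref{erg:flow}; it is quoted verbatim from Brin--Gromov \cite{BrG}, so the relevant benchmark is their argument. Your skeleton matches it: negative curvature gives an Anosov geodesic flow with absolutely continuous stable/unstable foliations, the $U(m)$-frame flow is a compact $U(m-1)$-extension, Brin's transitivity group $H\subseteq U(m-1)$ controls ergodicity, the geometric ``bubble'' construction shows $H$ acts transitively on $S^{2m-3}$, and one then invokes the Montgomery--Samelson--Borel classification of transitive sphere actions. Up to that point the proposal is a fair account of the strategy.

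The genuine gap is in the exclusion step --- which is exactly where the hypothesis ``$m=2$ or $m$ odd'' has to enter, and where your argument is not a proof. First, the appeal to Berger's holonomy classification is invalid: the transitivity group only furnishes a \emph{measurable}, flow-invariant reduction of the structure group of a bundle over $T_1^*X$, and such a reduction is not a parallel tensor on $X$ and places no constraint on the Riemannian holonomy group of $X$. (If measurable invariant reductions forced holonomy reductions, ergodicity of frame flows would be automatic in every dimension, which it is not.) Second, the determinant-cocycle/Liv\v{s}ic argument is both unsubstantiated and suspect: $H$ is generated by stable/unstable holonomies rather than by closed-geodesic data, so the Liv\v{s}ic theorem does not apply as stated; and the determinant of transverse parallel transport around a closed geodesic is the holonomy of the canonical bundle, which has no dependence on the parity of $m$ --- so if this argument excluded $SU(m-1)$ it would do so for \emph{all} $m$, proving more than is known (the case of even $m\geq 4$ is open, and by your own candidate list $SU(m-1)$ is the \emph{only} proper candidate there). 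The ``cohomological computation on $T_1^*X$ whose outcome depends on the parity of $m$'' is never exhibited, yet it carries the entire content of the theorem. In short, the hypothesis on $m$ is never actually used in a checkable way, and the decisive group-theoretic exclusion must be replaced by the arguments of \cite{BrG}.
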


\section{The Hodge Laplacian and the Lefschetz decomposition}

Let $\wedge^* X$ be the complex vector bundle $\wedge^* T_\cbb^* X$,
where $T_\cbb^* X$ is the complexification of the co-tangent bundle.
Then the Lefschetz
operator $L : C^\infty(X;\wedge^* X) \to C^\infty(X;\wedge^* X)$
is defined by exterior multiplication with the K\"ahler form $\omega$, i.e.
$L=\omega \wedge$. Its adjoint $L^*$ is then given by interior multiplication
with $\omega$. Is is well known that
\begin{gather}
 [L^*,L]:=H=\sum_k (m-r) P_r,
\end{gather}
where $P_r$ is the orthoprojection onto $C^\infty(\wedge^r X)$,
and $L,L^*,H$ define a representation of $sl_2(\cbb)$ which commutes
with the Laplace operator $\Delta=2\Delta_{\bar \partial}=2\Delta_{\partial}$.
The decomposition into irreducible representations on the level of harmonic
forms is called the Lefschetz decomposition. We will refer to this decomposition
as the Lefschetz decomposition in general.
Note that since the Lefschetz operator commutes with $\Delta$ each eigenspace
decomposes into a direct sum of irreducible subspaces for the
$sl_2(\cbb)$ action.

The operators $L,L^*,H,\partial,\bar\partial,\partial^*,\bar\partial^*,\Delta$
satisfy the following relations (see e.g. \cite{Ballmann})
\begin{gather}
 [L,\bar\partial^*]=-i \partial,\; [L^*,\partial]=i \bar \partial^*,\; \nonumber
 [L^*,\bar \partial]=-i \partial^*,\; [L,\partial^*]=i \bar \partial,\\ \nonumber
 [L^*,L]=H, \; [H,L]=-2 L,\; [H,L^*]=2 L^*,\\ \nonumber
 \{\partial,\partial\}= \{\bar\partial,\bar\partial\}=
 \{\partial^*,\partial^*\}= \{\bar\partial^*,\bar\partial^*\}=0, \\ \label{superrelations}
 [L,\bar\partial]=[L,\partial]=[L^*,\bar\partial^*]=[L^*,\partial^*]=0,\\ \nonumber
 \{\partial,\bar\partial\}=\{\partial,\bar\partial^*\}= \{\bar\partial,\partial^*\}=0,\\ \nonumber
 \{\partial,\partial^*\}= \{\bar\partial,\bar\partial^*\}=\frac{1}{2}\Delta. \nonumber
\end{gather}

Thus, the operators form a Lie superalgebra with central element $\Delta$
(see also \cite{FrGrRe99}).
Let $\Delta^{-1}|_{\mathrm{ker} \Delta^\perp}$ the inverse of the Laplace operator
on the orthocomplement of the kernel of $\Delta$. We view this as an operator defined
in $L^2(X,\wedge^* X)$ by defining it to be zero on $\mathrm{ker}\Delta$ and write
$\Delta^{-1}$ slightly abusing notation.

\subsection{The transversal Lefschetz decomposition} \label{tlef}

The operator $Q:=2 \Delta^{-1} \bar \partial \partial$ is a partial isometry with
initial space $\mathrm{Rg}(\bar \partial^*) \cap \mathrm{Rg}(\partial^*) $ and final
space $\mathrm{Rg}(\bar \partial) \cap \mathrm{Rg}(\partial)$. Hence,
$Q^* Q$ is the orthoprojection onto  $\mathrm{Rg}(\bar \partial^*) \cap \mathrm{Rg}(\partial^*) $
and $QQ^*$ is the orthoprojection onto $\mathrm{Rg}(\bar \partial) \cap
\mathrm{Rg}(\partial)$. From the above relations one gets
\begin{gather}
 [L,Q]=0,\\
 [L,Q^*]=2 i \Delta^{-1}(\bar \partial \bar \partial^* - \partial^* \partial),\\
 [Q^*,Q]=-2 \Delta^{-1}(\bar \partial \bar \partial^* - \partial^* \partial),
\end{gather}
from which one finds that
\begin{gather}
 [L-iQ,Q^*]=[L-iQ,Q]=0.
\end{gather}
We define the transversal Lefschetz operator $L_t$ by
\begin{gather}
 L_t:=L-i Q.
\end{gather}
Then clearly $L_t^*=L^*+i Q^*$ and one gets that
\begin{gather}
 [L_t^*,L_t]=H_t=H+[Q^*,Q],\\
 [H_t,L_t]=-2 L_t, \quad [H_t,L_t^*]=-2 L_t^*,\\
 [\partial,L_t]=[\partial^*,L_t]=[\bar\partial,L_t]=[\bar\partial^*,L_t]=0,
\end{gather}
and hence, also the transversal Lefschetz operators defines an action of
$\mathrm{sl}_2(\cbb)$ on $L^2(X,\wedge^* X)$. Unlike the Lefschetz
operator the transversal Lefschetz operator commutes with the holomorphic
and antiholomorphic codifferentials.

Denote by $\mathfrak{g}$ the Lie-superalgebra generated by
$a,\bar a,L,H,a^*,\bar a^*,L^*$ and relations
\begin{gather}
 [L,\bar a^*]=-i a,\; [L^*,a]=i \bar a^*,\;\nonumber
 [L^*,\bar a]=-i a^*,\; [L,a^*]=i \bar a,\\ \nonumber
 [L^*,L]=H, \; [H,L]=-2 L,\; [H,L^*]=2 L^*,\\ \nonumber
 \{a,a\}= \{\bar a,\bar a\}= \label{superrelations2}
 \{a^*,a^*\}= \{\bar a^*,\bar a^*\}=0,\\
 [L,\bar a]=[L,a]=[L^*,\bar a^*]=[L^*, a^*]=0,\\ \nonumber
 \{\bar a,a\}=\{\bar a,a^*\}= \{a,\bar a^*\}=0,\\\nonumber
\{a,a^*\}= \{\bar a,\bar a^*\}=1.
\end{gather}
The subspace of odd elements is spanned by $a,a^*,\bar a, \bar a^*$, the
subspace of even elements is spanned by $L,L^*$ and $H$.
In the following we will denote by $\mathcal{U}(\mathfrak{g})$ the universal enveloping algebra of this
Lie-superalgebra viewed as a unital $*$-algebra, i.e. the unital $*$-algebra generated
by the symbols $\{L,L^*,H,a,a^*,\bar a,\bar a^*\}$ and the above relations.

The relations (\ref{superrelations2}) are obtained from the relations
(\ref{superrelations}) by
sending $a$ to $\sqrt{2} \Delta^{-1/2} \partial$ and $\bar a$ to $\sqrt{2}
\Delta^{-1/2} \bar \partial$. Therefore, we obtain a $*$-representation of the
Lie-superalgebra $\mathfrak{g}$
on the orthogonal complement of the kernel of $\Delta$.

\subsection{The representation theory of $\mathcal{U}(\mathfrak{g})$}

The calculations in the previous section used the relations in
$\mathcal{U}(\mathfrak{g})$ only. Hence, they remain valid if we
regard $Q=\bar a a$ and $L_t:=L-i Q$ as elements in the abstract
$*$-algebra $\mathcal{U}(\mathfrak{g})$. Hence, $L_t,L_t^*$ generate
a subalgebra in $\mathcal{U}(\mathfrak{g})$ which is canonically
isomorphic to the universal enveloping algebra of $\mathrm{sl}_2(\cbb)$
and which we therefore denote by $\mathcal{U}(\mathrm{sl}_2(\cbb))$. Note that
$\mathcal{U}(\mathrm{sl}_2(\cbb))$ commutes with $a,\bar a, a^*$ and $\bar a^*$.
Since
$\mathcal{U}(\mathfrak{g})$ is generated by two commuting
subalgebras the representation theory for
$\mathcal{U}(\mathfrak{g})$ is very simple. The $*$-subalgebra
$\mathcal{A}$ generated by $a$ and $\bar a$ has the following
canonical representation on $\wedge^{*} \cbb^2 \cong
\cbb^4$. For an orthonormal basis $\{e,\bar e\}$ of $\cbb^2$ define
the action of $a$ by exterior multiplication by $\I e$, and the
action of $\bar a$ by exterior multiplication by $\I \bar e$. It is
easy to see that all non-trivial finite dimensional irreducible
$*$-representations of $\mathcal{A}$ are unitarily equivalent to
this representation.

Note that the equivalence classes of finite dimensional irreducible $*$-representations of
$\mathcal{U}(\mathrm{sl}_2(\cbb))$ are labeled by the non-negative integers.
Denote the Verma-module for the Spin-$\frac{n}{2}$ representation by $V_n$
and the distinguished highest weight vector in $V_n$ by $h$.
Remember that $V_n$ is spanned by vectors of the form $L_t^k h$ with $k=0,\ldots,n$
and we have $L_t^* h=0$ and $H_t h = n h$.

Now define an action of $\mathcal{U}(\mathfrak{g})$ on
$H_n:=V_n \otimes \wedge^{*,*} \cbb^2$ by
\begin{gather}\nonumber
 L_t (v \otimes w) = (L_t v) \otimes w,\\ \nonumber
 L_t^* (v \otimes w) = (L_t^* v) \otimes w,\\\nonumber
 H_t (v \otimes w) = (H_t v) \otimes w,\\
 a (v \otimes w)= v \otimes a w,\\\nonumber
 \bar a (v \otimes w)= v \otimes \bar a w,\\\nonumber
 a^* (v \otimes w)= v \otimes a^* w,\\\nonumber
 \bar a^* (v \otimes w)= v \otimes \bar a^* w,\nonumber
\end{gather}

Clearly, this defines a $*$-representation
of $\mathcal{U}(\mathfrak{g})$ on $H_n$.

\begin{theorem}
 The representations $H_n$ are irreducible and pairwise
 inequivalent. Any non-trivial finite dimensional irreducible $*$-representation
 of $\mathcal{U}(\mathfrak{g})$ is unitary equivalent to some $H_n$.
\end{theorem}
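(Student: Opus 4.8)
The plan is to exploit the structural observation already made above: $\mathcal{U}(\mathfrak{g})$ is generated by the two mutually commuting $*$-subalgebras $\mathcal{U}(\mathrm{sl}_2(\cbb))=\langle L_t,L_t^*\rangle$ and $\mathcal{A}=\langle a,\bar a,a^*,\bar a^*\rangle$. That these generate is clear since $L=L_t+\I\bar a a$, $L^*=L_t^*-\I a^*\bar a^*$ and $H=[L^*,L]$ all lie in the subalgebra generated by the two, and that they commute was noted when $L_t$ was introduced. I want to stress that no abstract tensor-product decomposition of $\mathcal{U}(\mathfrak{g})$ itself is needed; the whole argument is carried out inside a fixed finite-dimensional representation, where the images of the two subalgebras are ordinary matrix algebras.

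So let $\rho$ be a non-trivial finite-dimensional irreducible $*$-representation of $\mathcal{U}(\mathfrak{g})$ on $V$; by irreducibility $\rho(1)=\mathrm{id}_V$. First I would restrict $\rho$ to $\mathcal{A}$. The image $\rho(\mathcal{A})$ is a finite-dimensional $*$-subalgebra of $\mathrm{End}(V)$, hence semisimple, so $V$ decomposes as an orthogonal direct sum of irreducible $\mathcal{A}$-modules; by the classification of the $*$-irreps of $\mathcal{A}$ recalled above, each summand is a copy of the four-dimensional module $\wedge^{*,*}\cbb^2$ — the one-dimensional module cannot occur because $\rho(\{a,a^*\})=\rho(1)=\mathrm{id}_V$ would fail on it. Consequently, writing $M:=\mathrm{Hom}_{\mathcal{A}}(\wedge^{*,*}\cbb^2,V)$ for the multiplicity space, there is a unitary identification $V\cong\wedge^{*,*}\cbb^2\otimes M$ under which $\rho(\mathcal{A})=\mathrm{End}(\wedge^{*,*}\cbb^2)\otimes 1$.

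Next, since $\mathcal{U}(\mathrm{sl}_2(\cbb))$ commutes with $\mathcal{A}$, the operators $\rho(L_t),\rho(L_t^*)$ lie in the commutant of $\mathrm{End}(\wedge^{*,*}\cbb^2)\otimes 1$, which is $1\otimes\mathrm{End}(M)$; this produces a $*$-representation $\sigma$ of $\mathcal{U}(\mathrm{sl}_2(\cbb))$ on $M$. By the first paragraph the subalgebra of $\mathrm{End}(V)$ generated by $\rho(\mathcal{U}(\mathfrak{g}))$ equals the subalgebra generated by $\rho(\mathcal{A})$ and $\rho(\mathcal{U}(\mathrm{sl}_2(\cbb)))$, and because these two commute it equals $\mathrm{End}(\wedge^{*,*}\cbb^2)\otimes\sigma(\mathcal{U}(\mathrm{sl}_2(\cbb)))$. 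By Burnside's theorem $V$ is irreducible precisely when this is all of $\mathrm{End}(\wedge^{*,*}\cbb^2)\otimes\mathrm{End}(M)$, that is, precisely when $\sigma(\mathcal{U}(\mathrm{sl}_2(\cbb)))=\mathrm{End}(M)$, i.e. when $M$ is an irreducible $*$-module for $\mathcal{U}(\mathrm{sl}_2(\cbb))$. By the classification of finite-dimensional unitary $\mathrm{sl}_2(\cbb)$-modules, $M\cong V_n$ for some $n\geq 0$, and unwinding the identifications gives $V\cong\wedge^{*,*}\cbb^2\otimes V_n\cong H_n$ with exactly the prescribed actions. Running the same computation with $M=V_n$ shows conversely that each $H_n$ is irreducible, and restricting $H_n$ to $\mathcal{U}(\mathrm{sl}_2(\cbb))$ produces four copies of $V_n$, so $\dim V_n=n+1$ distinguishes the $H_n$ and they are pairwise inequivalent.

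The representation-theoretic skeleton above is essentially formal once two classical inputs are granted: the classification of the $*$-irreps of $\mathcal{A}$ (it is the CAR algebra of two fermionic modes, hence $\cong M_4(\cbb)$, with a single non-trivial irreducible module) and the classification of finite-dimensional unitary $\mathrm{sl}_2(\cbb)$-modules. The only genuinely non-routine ingredient is the structural one isolated in the first paragraph — that $\mathcal{U}(\mathfrak{g})$ is generated by two commuting subalgebras of exactly these two types — which has already been established, so I expect the only real care to go into the bookkeeping that keeps every identification unitary rather than merely linear; this is straightforward given that the relevant irreducible $*$-modules are unique up to unitary equivalence.
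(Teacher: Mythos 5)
Your proof is correct and follows essentially the same route as the paper: the paper simply asserts that an irreducible $*$-representation of $\mathcal{U}(\mathfrak{g})$ factors through $\mathcal{U}(\mathrm{sl}_2(\cbb))\otimes\mathcal{A}$ and is therefore an exterior tensor product of irreducibles, while you supply the standard commutant/Burnside argument proving that fact. The extra detail (semisimplicity of $\rho(\mathcal{A})$, exclusion of degenerate $\mathcal{A}$-modules via $\{a,a^*\}=1$, the multiplicity-space identification) is all sound and merely makes explicit what the paper leaves implicit.
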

\begin{proof}
 Since $\mathcal{U}(\mathfrak{g})$ is generated by two commuting subalgebras
 $\mathcal{U}(\mathrm{sl}_2(\mathbb{C}))$ and $\mathcal{A}$ any
 irreducible $*$-representation of is also an irreducible $*$-representation
 of $\mathcal{U}(\mathrm{sl}_2(\mathbb{C})) \otimes \mathcal{A}$. If it is
 finite dimensional it is therefore a tensor product of two finite dimensional
 irreducible representations of $\mathcal{U}(\mathrm{sl}_2(\mathbb{C}))$ and $\mathcal{A}$.
\end{proof}

\begin{cor}
 Any non trivial finite dimensional irreducible
 $*$- representation of $\mathcal{U}(\mathfrak{g})$
 decomposes into a direct sum of $4$ equivalent modules for the $\mathrm{sl}_2(\mathbb{C})$
 action defined by $L_t,L_t^*,H_t$.
\end{cor}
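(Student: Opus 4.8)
The plan is to read the decomposition directly off the structure of $H_n$ supplied by the Theorem. First I would invoke the Theorem: any nontrivial finite dimensional irreducible $*$-representation of $\mathcal{U}(\mathfrak{g})$ is unitarily equivalent to $H_n=V_n\otimes\wedge^{*,*}\cbb^2$ for some nonnegative integer $n$ (uniquely determined, e.g.\ by $\dim H_n=4(n+1)$). Since unitary equivalence of $\mathcal{U}(\mathfrak{g})$-representations in particular intertwines the actions of the subalgebra $\mathcal{U}(\mathrm{sl}_2(\cbb))$ generated by $L_t,L_t^*,H_t$, it suffices to decompose $H_n$ as an $\mathrm{sl}_2(\cbb)$-module.

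Next I would observe, straight from the defining formulas for the action of $\mathcal{U}(\mathfrak{g})$ on $H_n$, that $L_t$, $L_t^*$ and $H_t$ all act as $T\otimes\mathrm{id}_{\wedge^{*,*}\cbb^2}$, i.e.\ they act only on the first tensor factor and trivially on the second. Hence, as a module for the $\mathrm{sl}_2(\cbb)$-action, $H_n$ is isomorphic to $V_n\otimes W$ where $W:=\wedge^{*,*}\cbb^2$ is regarded as a vector space carrying the trivial $\mathrm{sl}_2(\cbb)$-action. Choosing any basis of $W$ gives an $\mathrm{sl}_2(\cbb)$-module isomorphism $H_n\cong V_n^{\oplus\dim W}$, and since $\wedge^{*,*}\cbb^2\cong\cbb^4$ we have $\dim W=4$. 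Finally, $V_n$ is the irreducible $(n+1)$-dimensional representation of $\mathrm{sl}_2(\cbb)$ — it is generated from its highest weight vector $h$ by repeated application of $L_t$, with $H_th=nh$, $L_t^*h=0$ and $L_t^{n+1}h=0$ — so the four summands are irreducible and mutually equivalent, which is exactly the assertion.

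There is essentially no obstacle: the entire content is carried by the Theorem, and the Corollary is just the remark that $L_t,L_t^*,H_t$ ignore the $\mathcal{A}$-factor in the tensor product decomposition $\mathcal{U}(\mathrm{sl}_2(\cbb))\otimes\mathcal{A}$. The only point worth a sentence is canonicity: one can replace the basis choice by noting that $W$ is precisely the multiplicity space $\Hom_{\mathrm{sl}_2(\cbb)}(V_n,H_n)$ and that $H_n\cong V_n\otimes W$ is the canonical isotypic decomposition, so that the multiplicity $4$ is $\dim\Hom_{\mathrm{sl}_2(\cbb)}(V_n,H_n)=\dim\wedge^{*,*}\cbb^2=4$, independently of any choices.
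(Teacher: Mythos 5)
Your proof is correct and follows exactly the route the paper intends: the corollary is stated without proof as an immediate consequence of the preceding theorem, since every nontrivial finite dimensional irreducible $*$-representation is unitarily equivalent to $H_n=V_n\otimes\wedge^{*,*}\mathbb{C}^2$ and the operators $L_t,L_t^*,H_t$ act only on the first tensor factor, giving $H_n\cong V_n^{\oplus 4}$ as a module for that $\mathrm{sl}_2(\mathbb{C})$-action. Your closing remark identifying the multiplicity space with $\wedge^{*,*}\mathbb{C}^2$ is a nice touch but adds nothing essential.
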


If $h_n$ is a highest weight vector of $V_n$ then the kernel of $L_t^*$
in the representation $H_n$ is given by $h_n \otimes \wedge^* \cbb^2$. Using
the unitary basis $e,\bar e$ for $\cbb^2$ as before we see that the vectors
$$h_n \otimes 1, h_n \otimes e, h_n \otimes \bar e $$ are in the kernel of
$L^*$. Moreover,
\begin{gather}
 H (h_n \otimes 1) = (n-1)(h_n \otimes 1),\\
 H (h_n \otimes e) =  n (h_n \otimes e),\\
 H (h_n \otimes \bar e) =  n (h_n \otimes \bar e).
\end{gather}
Therefore, in the decomposition of $H_n$ into irreducibles of the
$\mathrm{sl}_2(\mathbb{C})$ action defined by $L,L^*,H$ the representations
$V_n$ occur with multiplicity at least $2$ and the representation $V_{n-1}$
occurs with multiplicity at least $1$.
The vector $h_n \otimes (e \wedge \bar e)$ has weight $n+1$ and therefore,
there must be another representation of highest weight greater or equal than
$n+1$ occurring.
Since
$$
 4 \dim V_{n} - 2 \dim V_n - \dim V_{n-1}=\dim V_{n+1}
$$
this shows that as a module for the $\mathrm{sl}_2(\mathbb{C})$
action defined by $L,L^*,H$ we have $H_n=V_{n+1} \oplus V_{n} \oplus  V_{n} \oplus V_{n-1}$.

\begin{cor}\label{simplificator1}
 Every non-trivial finite dimensional irreducible $*$- representation of $\mathcal{U}(\mathfrak{g})$
 is as a module for the $\mathrm{sl}_2(\mathbb{C})$
 action defined by $L,L^*,H$ unitarily equivalent to the direct sum
 $V_{n+1} \oplus V_{n} \oplus  V_{n} \oplus V_{n-1}$. By convention
 $V_{-1}=\{0\}$.
\end{cor}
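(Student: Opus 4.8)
The plan is to combine the classification theorem just proved with an explicit determination of the $\mathrm{sl}_2(\cbb)$-module structure of the model representations $H_n$ for the action generated by $L,L^*,H$. By that theorem every non-trivial finite dimensional irreducible $*$-representation of $\mathcal{U}(\mathfrak{g})$ is unitarily equivalent to some $H_n = V_n\otimes\wedge^{*,*}\cbb^2$, and a unitary equivalence of $\mathcal{U}(\mathfrak{g})$-modules is a fortiori one for the subalgebra generated by $L,L^*,H$; so it suffices to decompose each $H_n$ under that $\mathrm{sl}_2(\cbb)$. I would start from the identities, valid already in $\mathcal{U}(\mathfrak{g})$, namely $L = L_t + iQ$, $L^* = L_t^* - iQ^*$ and $H = [L^*,L]$ with $Q = \bar a a\in\mathcal{A}$, and observe that on $H_n$ the operators $L_t,L_t^*,H_t$ act on the first tensor factor only while $Q$, $Q^*$ and $[Q^*,Q]$ act on the second factor only; a short computation with this tensor splitting expresses $H$ on $H_n$ as $H_t$ plus the contribution of $[Q^*,Q]$ from the second factor (with the sign as fixed in Section~\ref{tlef}).

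Next I would diagonalize the relevant operators on the four-dimensional model $\wedge^{*,*}\cbb^2$ with unitary basis $1,e,\bar e,e\wedge\bar e$. Since $Q$ is a partial isometry with initial space $\cbb\cdot 1$ and final space $\cbb\cdot(e\wedge\bar e)$, the operators $Q^*Q$ and $QQ^*$ are the orthoprojections onto these two lines, so $[Q^*,Q]$ is diagonal with eigenvalues $1,0,0,-1$ on $1,e,\bar e,e\wedge\bar e$; in particular $Q^*$ annihilates $1$, $e$ and $\bar e$. Because $L_t^*$ annihilates exactly the highest weight line $\cbb h_n$ of $V_n$ and $L^* = L_t^* - iQ^*$, the three vectors $h_n\otimes 1$, $h_n\otimes e$, $h_n\otimes\bar e$ lie in $\ker L^*$ and are therefore highest weight vectors for the $(L,L^*,H)$-action; reading off their $H$-eigenvalues from $H_t h_n = n h_n$ together with the eigenvalues above gives two of weight $n$ and one of weight $n\pm1$, so these vectors generate submodules $V_n$, $V_n$ and $V_{n\pm1}$. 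The remaining extreme vector $h_n\otimes(e\wedge\bar e)$ is an $H$-eigenvector of the opposite extreme weight $n\mp1$, which forces a further irreducible summand of highest weight at least $n+1$ to occur.

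To finish I would run the dimension count: $\dim H_n = 4\dim V_n = 4(n+1)$, and the combinatorial identity $4\dim V_n = \dim V_{n+1} + 2\dim V_n + \dim V_{n-1}$ (with the convention $V_{-1} = \{0\}$, which also handles $n=0$) shows that the three submodules already found together with one more forced copy of $V_{n+1}$ or $V_{n-1}$ exhaust $H_n$ by dimension; one pins down this last summand by exhibiting its highest weight vector as the combination $n\,(h_n\otimes e\wedge\bar e) + i\,(L_t h_n\otimes 1)$ inside the (two-dimensional) weight space of the appropriate extreme weight, and checking $L^*$ annihilates it using $L_t^* L_t h_n = n h_n$ and $Q^*(e\wedge\bar e)\in\cbb\cdot 1$, $Q^*(1)=0$. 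Transporting the resulting decomposition $H_n = V_{n+1}\oplus V_n\oplus V_n\oplus V_{n-1}$ back along the unitary equivalence gives the corollary. I expect the only genuinely delicate point to be the weight bookkeeping — fixing the signs so that the $H$-weights of the four highest weight vectors come out as $n+1,n,n,n-1$ and confirming that $\ker L^*$ is exactly four-dimensional — which is secured precisely by the dimension identity together with the explicit fourth highest weight vector.
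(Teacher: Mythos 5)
Your argument is correct and is essentially the proof in the paper: exhibit the highest weight vectors $h_n\otimes 1$, $h_n\otimes e$, $h_n\otimes\bar e$ in $\ker L^*$, read off their $H$-weights from the eigenvalues of $[Q^*,Q]$ on $\wedge^*\cbb^2$, use the remaining extreme weight vector $h_n\otimes(e\wedge\bar e)$ to account for the last summand, and close with the dimension identity $4\dim V_n=\dim V_{n+1}+2\dim V_n+\dim V_{n-1}$. Your hedging on the sign of the $[Q^*,Q]$-contribution to $H$ is harmless, since the final decomposition is symmetric under swapping the roles of $h_n\otimes 1$ and $h_n\otimes(e\wedge\bar e)$, and your explicit fourth highest weight vector supplies the step needed to close the count in either case; beyond that your proposal only adds extra detail to the paper's argument.
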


\begin{cor} \label{simplificator}
 Let $V$ and $W$ be two finite dimensional $\mathcal{U}(\mathfrak{g})$ modules.
 Then $V$ and $W$ are unitarily equivalent if and only if they are equivalent as modules
 for the $sl_2(\mathbb{C})$ action defined by $L,L^*,H$.
\end{cor}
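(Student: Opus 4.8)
The plan is to reduce the claim to a count of multiplicities of the irreducible modules $H_n$, followed by an elementary recurrence. One implication needs nothing: since $H=[L^*,L]$ and $L=L_t+iQ$ with $Q=\bar a a$ all belong to $\mathcal{U}(\mathfrak{g})$, a unitary equivalence of $\mathcal{U}(\mathfrak{g})$-modules restricts to an equivalence of the associated $\mathrm{sl}_2(\mathbb{C})$-modules for the action of $L,L^*,H$. So the substance is the converse, and my plan for it runs as follows.

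First I would record that a finite dimensional unitary representation of the $*$-algebra $\mathcal{U}(\mathfrak{g})$ is completely reducible, the orthogonal complement of an invariant subspace being again invariant, and that the relation $\{a,a^*\}=1$ excludes a nonzero module on which all generators vanish. Hence, by the classification theorem above, every nonzero finite dimensional unitary $\mathcal{U}(\mathfrak{g})$-module $V$ is unitarily equivalent to $\bigoplus_{n\ge 0}H_n^{\oplus m_n(V)}$ with uniquely determined multiplicities $m_n(V)\in\nbb$ (almost all zero), the uniqueness coming from Schur's lemma together with the pairwise inequivalence of the $H_n$. Next I would invoke Corollary~\ref{simplificator1}: as an $\mathrm{sl}_2(\mathbb{C})$-module for $L,L^*,H$ one has $H_n\cong V_{n+1}\oplus V_n\oplus V_n\oplus V_{n-1}$ (with $V_{-1}=\{0\}$), so, setting $m_{-1}(V):=0$, the multiplicity of $V_k$ in $V$ as such a module equals $\mu_k(V):=m_{k-1}(V)+2m_k(V)+m_{k+1}(V)$.

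The key step is then purely arithmetic. If $V$ and $W$ are equivalent as $\mathrm{sl}_2(\mathbb{C})$-modules, then $\mu_k(V)=\mu_k(W)$ for all $k$; writing $d_n:=m_n(V)-m_n(W)\in\zbb$ and $d_{-1}:=0$, this says $d_{k-1}+2d_k+d_{k+1}=0$ for all $k\ge 0$. Starting from $d_{-1}=0$ and treating $d_0$ as a free parameter, a two-step induction gives $d_k=(-1)^k(k+1)\,d_0$; since $d_k$ vanishes for $k$ large this forces $d_0=0$, hence $d_k=0$ for all $k$, so $m_n(V)=m_n(W)$ for every $n$ and $V$ and $W$ are unitarily equivalent over $\mathcal{U}(\mathfrak{g})$. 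The one place where anything could go wrong, and the point carrying the real content of the corollary, is precisely this recurrence: it is the assertion that the linear map $(m_n)_n\mapsto(\mu_k)_k$ from multiplicity vectors of $\mathcal{U}(\mathfrak{g})$-modules to those of $\mathrm{sl}_2(\mathbb{C})$-modules is injective. Everything around it is the standard representation theory of finite dimensional unitary $*$-representations, and I do not expect any difficulty there.
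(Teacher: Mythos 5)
Your argument is correct, and it supplies exactly the reasoning the paper leaves implicit: the paper states this corollary without proof as a consequence of Corollary~\ref{simplificator1}, and the only nontrivial content is the injectivity of the map $(m_n)_n\mapsto(\mu_k)_k$ with $\mu_k=m_{k-1}+2m_k+m_{k+1}$, which your recurrence $d_k=(-1)^k(k+1)d_0$ establishes cleanly. Your preliminary reductions (complete reducibility of finite dimensional $*$-representations, uniqueness of multiplicities via Schur, exclusion of the trivial summand by $\{a,a^*\}=1$) are all sound and consistent with the paper's setup.
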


\subsection{The model representations}

There is another very natural representation $\rho$ of the $*$-algebra
$\mathcal{U}(\mathfrak{g})$ which is important for our purposes. This representation will be referred to
as the model representation and can be described as follows.
Let us view $\cbb^{m} \cong \mathbb{R}^{2m}$ as a real
vector space with complex structure $J$.
Let $\{e_i\}_{i=1,\ldots,m}$ be the standard unitary basis in $\mathbb{R}^{2m}$. Then  in
the complexification $\mathbb{R}^{2m} \otimes \cbb = \mathbb{C}^{2m}$
we define
\begin{gather}
 w_i= e_i - \I J e_i,\\
 \bar w_i=e_i + \I J e_i.
\end{gather}
We define $\rho(L)$ to be the operator of exterior multiplication by
$\omega=\frac{\I}{2}\sum_{i=1}^m w_i \wedge \bar w_i$ on the space $\wedge^* \cbb^{2m} =
\bigoplus_{p,q} \wedge^{p,q} \cbb^{2m}$. Let $\pi(L^*)$ be its adjoint, namely the
operator of interior multiplication by $\omega$. Let $\rho(a)$
be the operator of exterior multiplication by
$\frac{\I}{\sqrt{2}} w_1$ and
$\rho( \bar a)$ be the operator of exterior multiplication by
$\frac{\I}{\sqrt{2}} \bar w_1$.
The operators $\rho(a^*)$ and $\rho(\bar a^*)$ are defined as the adjoints
of $\rho(a)$ and $\rho(\bar a)$.
This defines a representation $\rho$
of $\mathcal{U}(\mathfrak{g})$ on $\wedge^* \cbb^{2m}$.
This representation
decomposes into a sum of irreducibles. Note that $\rho(L_t)=\rho(L-\I \bar a a)$ is given by
exterior multiplication by $\frac{\I}{2}\sum_{i=2}^m w_i \wedge \bar w_i$.
The restriction of $\rho$ to the two subalgebras generated by
$\rho(L),\rho(L^*),\rho(H)$ and $\rho(L_t),\rho(L^*_t),\rho(H_t)$ define
representations of $sl_2(\mathbb{C})$. Since the maximal eigenvalue of $H$ is $m$,
only representations of highest weight $k$ with $k \leq m$ can occur in the decomposition of the model
representation with respect to the $sl_2(\mathbb{C})$-action by $\rho(L),\rho(L^*),\rho(H)$.
Consequently, by Cor  \ref{simplificator1}  in the decomposition of the model representation into irreducible representations
only the representations $H_k$ with $k \leq m$ can occur.

\section{Asymptotic decomposition of Eigenspaces}

Since the action of $\mathcal{U}(\mathfrak{g})$ commutes with the Laplace
operator $\Delta$ on forms each eigenspace
$$V_{\lambda} = \{\phi \in \wedge^* X: \Delta \phi = \lambda
\phi\}$$ with $\lambda \not=0$
is a $\mathcal{U}(\mathfrak{g})$-module and can be decomposed into
a direct sum of irreducible $\mathcal{U}(\mathfrak{g})$-modules.
In the previous section we classified all irreducible $*$-representations
of $\mathcal{U}(\mathfrak{g})$ and found that they are isomorphic
to $H_k$ for some non-negative integer $k$.
Therefore, we may define the function $m_k(\lambda)$ as
\begin{gather}
 m_k(\lambda):=\{\mbox{the multiplicity of} \;
 H_k \; \mbox{in}\;\; V_{\lambda} \},
\end{gather}
so that
\begin{gather}
 V_\lambda \cong \bigoplus_{k = 0}^{\infty} m_k(\lambda) H_k
\end{gather}

\begin{theorem}\label{main1}
Let $X$ be any compact K\"ahler manifold of complex dimension $m$.
Then in the decomposition of the eigenspaces of the Laplace-Beltrami operator $\Delta$ into irreducible
representations of $\mathcal{U}(\mathfrak{g})$
the proportion of irreducible summands of
type $H_k$ in $L^2(X; \wedge^* X)$ is in average the same as the
proportion of such irreducibles in the model representation of
$\mathcal{U}(\mathfrak{g})$ on $\wedge^* \C^{2m}$:
\begin{equation}\label{asymp:mult}
\frac{1}{N(\lambda)} \sum_{j: \lambda_j \leq \lambda} m_k(\lambda_j)
\sim  \frac{1}{\mathrm{dim}(\wedge^*{\cbb^{2m}})} m_k(\wedge^* \C^{2m}) ,
\end{equation}
 where $N(\lambda) = \mathrm{tr} \Pi_{[0,\lambda]}$ and
$\Pi_{[0, \lambda]}$ is the spectral projection of the Laplace-Beltrami operator $\Delta$.
\end{theorem}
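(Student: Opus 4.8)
The plan is to express the averaged multiplicity on the left of (\ref{asymp:mult}) through the local Weyl law applied to a \emph{bundle endomorphism} of $\wedge^* X$, so that the constant on the right emerges as the fibrewise trace of that endomorphism. First, write $V_\lambda\cong\bigoplus_k m_k(\lambda)H_k$ and use Corollary~\ref{simplificator1} to decompose each $H_k$ as the $\mathrm{sl}_2(\mathbb{C})$-module $V_{k+1}\oplus V_k\oplus V_k\oplus V_{k-1}$ for the action of $L,L^*,H$. Letting $\mu_n(\lambda)$ be the multiplicity of the Verma module $V_n$ in $V_\lambda$ as a module for that $\mathrm{sl}_2(\mathbb{C})$-action, one gets $\mu_n(\lambda)=m_{n-1}(\lambda)+2m_n(\lambda)+m_{n+1}(\lambda)$, and the same identity holds verbatim in the model representation $\rho$ on $\wedge^*\mathbb{C}^{2m}$. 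Since both $m_k(\lambda)$ and $m_k(\wedge^*\mathbb{C}^{2m})$ vanish for $k>m$ (the largest eigenvalue of $H$ is $m$), the map $(m_k)\mapsto(\mu_n)$ is invertible between finite-dimensional spaces; comparing generating functions gives the explicit inverse $m_k=\sum_{n<k}(-1)^{k-n-1}(k-n)\mu_n$. So there are fixed integers $c_{kn}$, independent of the module, with $m_k(\lambda)=\sum_n c_{kn}\mu_n(\lambda)$ for all $\lambda\neq0$ and $m_k(\wedge^*\mathbb{C}^{2m})=\sum_n c_{kn}\mu_n(\wedge^*\mathbb{C}^{2m})$.

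The next step is to realise $\mu_n(\lambda)$ as a trace over $V_\lambda$. For any finite-dimensional $\mathrm{sl}_2(\mathbb{C})$-module the multiplicity of $V_n$ equals the dimension of the space of highest weight vectors of weight $n$; for the $L,L^*,H$-action on forms these are exactly the primitive forms (those annihilated by $L^*$), and weight $n$ corresponds to degree $m-n$ because $H=\sum_r(m-r)P_r$. The orthogonal projection $\Pi_n$ of $\wedge^* X$ onto the subbundle of primitive $(m-n)$-forms is a smooth bundle endomorphism: the space of primitive forms of a given degree has constant dimension along $X$ by $U(m)$-homogeneity, hence is a genuine subbundle. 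It commutes with $\Delta$, since $\Delta=2\Delta_{\bar\partial}$ preserves the bidegree and commutes with $L^*$. Hence $\mu_n(\lambda)=\mathrm{tr}(\Pi_{V_\lambda}\Pi_n)$ for $\lambda\neq0$, and putting $C_k:=\sum_n c_{kn}\Pi_n$ — again a $\Delta$-commuting bundle endomorphism — we obtain $m_k(\lambda)=\mathrm{tr}(\Pi_{V_\lambda}C_k)$. Running the same computation in $\wedge^*\mathbb{C}^{2m}$, which is nothing but the classical Lefschetz decomposition, shows $\mathrm{tr}(C_k(x))=\sum_n c_{kn}\mu_n(\wedge^*\mathbb{C}^{2m})=m_k(\wedge^*\mathbb{C}^{2m})$, in particular a quantity independent of $x$.

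Finally, I would invoke the local Weyl law for the Laplace-Beltrami operator acting on sections of the Hermitian bundle $\wedge^* X$ (H\"ormander's theorem in its bundle-valued form; cf. \cite{JS} and references therein): for a zeroth order operator $A$ commuting with $\Delta$,
\begin{gather}
 \mathrm{tr}\big(\Pi_{[0,\lambda]}A\big)=(2\pi)^{-n}\int_X\int_{|\xi|_g\leq\sqrt{\lambda}}\mathrm{tr}\,\sigma_A(x,\xi)\,d\xi\,d\mathrm{vol}_g(x)+O\big(\lambda^{(n-1)/2}\big).
\end{gather}
Applying this with $A=C_k$, whose symbol is the bundle endomorphism $C_k(x)$ of constant trace $m_k(\wedge^*\mathbb{C}^{2m})$, and with $A=\mathrm{Id}$, whose fibrewise trace is $\dim(\wedge^*\mathbb{C}^{2m})$, the common prefactor $(2\pi)^{-n}\mathrm{vol}_g(X)\,\omega_n\lambda^{n/2}$ ($\omega_n$ the volume of the unit ball in $\mathbb{R}^n$) cancels in the quotient $\mathrm{tr}(\Pi_{[0,\lambda]}C_k)/N(\lambda)$. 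Since $C_k$ commutes with $\Delta$ and $\mathrm{tr}(\Pi_{[0,\lambda]}C_k)=\sum_{j:\lambda_j\leq\lambda}m_k(\lambda_j)+O(1)$ (the $O(1)$ being the bounded contribution of the space of harmonic forms, on which $m_k$ is not defined), dividing by $N(\lambda)=\mathrm{tr}\,\Pi_{[0,\lambda]}$ and letting $\lambda\to\infty$ gives precisely (\ref{asymp:mult}). The points requiring care are that the primitive-form projections $\Pi_n$ are smooth and commute with $\Delta$, and the bookkeeping in inverting the multiplicity system; the only analytic input is the standard local Weyl law, so there is no genuine obstacle beyond citing it in bundle-valued form. (One could alternatively realise $m_k(\lambda)$ directly as the trace of the zeroth order pseudodifferential operator obtained by composing the $H$-eigenprojection to the eigenvalue $k-1$ with the orthogonal projection onto $\ker L_t^*\cap\ker a^*\cap\ker\bar a^*$ — pseudodifferential of order zero because the nonzero part of its symbol, like the operator $L_tL_t^*+aa^*+\bar a\bar a^*$ on each $H_n$, is uniformly bounded below — but the bundle-endomorphism version is more transparent.)
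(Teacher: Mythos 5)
Your argument is sound and reaches the theorem by a genuinely different route than the paper. The paper works directly with the isotypic projector $P_k$ onto the $H_k$-component: it observes that the quadratic Casimir $\mathcal{C}=L_t^*L_t+L_tL_t^*+\tfrac12 H_t^2$ of the \emph{transversal} $\mathrm{sl}_2(\cbb)$-action is a zeroth order pseudodifferential operator (not a bundle map, since $L_t=L-iQ$ involves $\Delta^{-1}\bar\partial\partial$), writes $P_k=Q(\mathcal{C})$ for an interpolating polynomial $Q$, identifies the fibrewise trace of $\sigma_{P_k}$ with $m_k(\wedge^*\cbb^{2m})\dim(H_k)$ via the model representation, and then extracts the asymptotics from the heat trace of $P_ke^{-t\Delta}$ by Karamata's Tauberian theorem. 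You instead push everything down to honest bundle endomorphisms: using Corollary~\ref{simplificator1} you convert $H_k$-multiplicities into $V_n$-multiplicities for the ordinary Lefschetz $\mathrm{sl}_2$-action, realise the latter as traces against the smooth, $\Delta$-commuting projections onto primitive $(m-n)$-forms, and finish with the local Weyl law. This buys you a proof that avoids both the pseudodifferential nature of $P_k$ and the Tauberian step, at the price of the multiplicity bookkeeping; it does lean on Corollary~\ref{simplificator1} in an essential way, whereas the paper's proof uses it only to bound which $H_k$ occur.

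One concrete slip: your explicit inversion $m_k=\sum_{n<k}(-1)^{k-n-1}(k-n)\mu_n$ is wrong — it gives $m_0=0$ identically, yet for $H_0=V_1\oplus 2V_0$ one has $\mu_0=2$, $\mu_1=1$, $m_0=1$. The system $\mu_n=m_{n-1}+2m_n+m_{n+1}$ (with $m_{-1}=0$) must be solved from the top down, giving $m_k=\sum_{n>k}(-1)^{n-k-1}(n-k)\mu_n$; your two-sided generating-function manipulation misses the boundary term coming from $m_{-1}=0$. Since the rest of your argument only uses the \emph{existence} of universal integer coefficients $c_{kn}$ with $m_k=\sum_n c_{kn}\mu_n$ — which is correct, the transformation being invertible (the truncated matrix is tridiagonal with nonzero determinant) — the error does not propagate, but the displayed formula should be corrected.
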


We recall that $N(\lambda) \sim \frac{rk(E) vol(X)}{(4 \pi)^m
\Gamma(m + 1)} \lambda^{m}$ for the Laplacian on a bundle $E \to X$
of rank $rk(E)$ over a manifold $X$ of real dimension $2m$.
Note that apart from the fact that we are not dealing with a group
but with a Lie superalgebra the action
is neither on $X$, nor on $T^*X$, but rather on
the total space of the vector bundle $\pi^*(\wedge^* X) \to T_1^* X$ . The action there
leaves the fibers invariant and therefore it is rather different from a group action
on the base manifold. The above theorem
thus falls outside the scope of the equivariant Weyl laws of
articles such as \cite{BH1, BH2, GU, HR, TU}. In fact its conclusion is rather different from the conclusions in these
articles as in our case only a fixed number
of types of irreducible representations may occur.

\begin{proof}
 For a compact K\"ahler manifold  $\mathcal{U}(\mathfrak{g})$ acts
by pseudodifferential operators on $C^\infty(X;\wedge^* X)$. Therefore, the symbol
map defines an action of $\mathcal{U}(\mathfrak{g})$ on each fiber of the bundle
$\pi^* (\wedge^* X) \to T_1^* X$. The representation of $\mathcal{U}(\mathfrak{g})$ on each
fiber is easily seen to be equivalent to the model representation.
 Since the maximal eigenvalue of $H$, acting on $L^2(X;\wedge^* X)$, is $m$, only representations
 of highest weight $k$ with $k \leq m$ can occur in the decomposition of
 $L^2(X;\wedge^* X)$ into irreducible subspaces
 with respect to the $\mathrm{sl}_2(\cbb)$-action by $L^*,L,H$. Again, by
 Cor \ref{simplificator1} types $H_k$ with $k > m$
 cannot occur in the decomposition with respect to the
 $\mathcal{U}(\mathfrak{g})$-action.
 Let $P_k$ be the orthogonal projection onto the type $H_k$ in $L^2(X;\wedge^* X)$.
 Then $P_k$ is actually a pseudodifferential operator of order $0$.
 Namely, the quadratic Casimir operator $\mathcal{C}$ of the
 $\mathrm{sl}_2(\cbb)$-action by $L^*_t,L_t,H_t$ given by
 \begin{gather}
  \mathcal{C} = L^*_t L_t + L_t L_t^* + \frac{1}{2} H^2,
 \end{gather}
 is a pseudodifferential operator of order $0$. On a subspace
 of type $H_k$ it acts like multiplication by $\frac{k^2}{2}+k$.
 Therefore, if $Q$ is a real polynomial that is equal to $1$
 at $\frac{k^2}{2}+k$ and equal to $0$ at $\frac{l^2}{2}+l$ for any integer $l \not=k$
 between $0$ and $m$ it follows that $P_k=Q(\mathcal{C})$.
 Thus, $P_k$ is a pseudodifferential operator of order $0$ and its principal
 symbol at $\xi$ projects onto the subspace in the fiber $\pi^*_\xi( \wedge^* X)$ which is spanned
 by the representations of type $H_k$. Therefore, for every $\xi$:
 \begin{gather}
  \frac{1}{\mathrm{dim}(H_k)} \mathrm{tr}(\sigma_{P_k}(\xi))=m_k(\wedge^* \C^{2m}).
 \end{gather}
 Applying Karamatas Tauberian theorem to the heat trace expansion
 \begin{gather}
  \mathrm{tr}(P_k e^{- t \Delta}) = (4\pi)^{-m} \mathrm{Vol}(X)
  \left(\int_{T_1^* X} \mathrm{tr}(\sigma_{P_k}(\xi)) d\xi \right) t^{m} + O(t^{m-\frac{1}{2}}).
 \end{gather}
 gives
 \begin{gather}
  \frac{1}{N(\lambda)} \sum_{j: \lambda_j \leq \lambda}
   \mathrm{tr}(\Pi_{[0,\lambda]} P_k) \sim m_k(\wedge^* \C^{2m})
   \mathrm{dim}(H_k) \frac{1}{\mathrm{dim}(\wedge^*{\cbb^{2m}})}.
 \end{gather}
 After dividing by $\mathrm{dim}(H_k)$ this reduces to the statement of the theorem.
\end{proof}

\begin{rem}
A natural question is whether, for generic K\"ahler metrics, the eigenspaces of
the Laplace-Beltrami operator are irreducible representations of the Lie superalgebra  $\mathfrak{g}$
and of complex conjugation.
Such irreducibility  is suggested by the    heuristic principle of  `no accidental degeneracies',
i.e. in generic cases,  degeneracies of  eigenspaces should   be  entirely due to  symmetries
(see  \cite{Zel90} for some results  and references).
Cor. \ref{cor53} would then suggest that for a  generic K\"ahler manifold the
spectrum of $\Delta$ on the space of primitive co-closed $(p,q)$-forms
should be  simple for fixed $p$ and $q$.
\end{rem}

\section{Quantum ergodicity for the Laplace-Beltrami operator}

We will now investigate the question of quantum ergodicity for the Laplace-Beltrami operator
on a compact K\"ahler manifold $X$ and we keep the notations from the previous sections.
As shown in  \cite{JS} this question  is intimately related to the ergodic decomposition
of the tracial state on the $C^*$-algebra $C(X;\pi^*\wedge ^* X)$. The transversal Lefschetz
decomposition plays an important role here.

\subsection{Ergodic decomposition of the tracial state}

On the space of $(p,q)$-forms denote by $P_{p,q}$ the projection onto the space
of transversally-primitive forms, i.e. onto the kernel of $L_t^*$. Let $P_{p,q,k}$
be the projection onto the range of $L_t^k P_{p-k,q-k}$.
The operators
\begin{gather}
P_1=P_{\partial \bar \partial}=4 \Delta^{-2} \partial \bar \partial \bar \partial^* \partial^*=Q Q^*,\\
P_2=P_{\partial^* \bar \partial^*}=4 \Delta^{-2}\partial^* \bar \partial^* \bar \partial \partial=Q^*Q,\\
P_3=P_{\partial \bar \partial^*}=4 \Delta^{-2}\partial \bar \partial^* \bar \partial \partial^* ,\\
P_4=P_{\partial^* \bar \partial}=4 \Delta^{-2}\partial^* \bar \partial \bar \partial^* \partial
\end{gather}
are projections onto the ranges of the corresponding operators.
We have
\begin{gather}
 P_H+\sum_{i=1}^4 P_i=1
\end{gather}
 where $P_H$ is the finite dimensional projection onto the space of harmonic forms.
 Using the transversal Lefschetz decomposition we obtain a further decomposition
 \begin{gather}
  \sum_{k=0}^{\mathrm{min}(p,q)} P_{p,q,k} P_H + \sum_{k=0}^{\mathrm{min}(p,q)}\sum_{i=1}^4
  P_{p,q,k} P_i=1
 \end{gather}
 where each of the subspaces onto which $P_{p,q,k} P_i$ projects is invariant under the Laplace
 operator.

Note that the principal symbols of these projections are invariant projections
 in $C(T_1^*X,\pi^*\mathrm{End}(\wedge^{p,q} X))$ and the above relation gives  rise to a
 decomposition of the
 tracial state $\omega_{tr}$ on $C(T_1^*X,\pi^*\mathrm{End}(\wedge^{p,q} X))$ defined by
 \begin{gather}
  \omega_{tr}(a):=\frac{1}{\mathrm{rk}(\wedge^{p,q} X)} \int_{T_1^*X} \mathrm{tr}(a(\xi)) d\xi
 \end{gather}
into invariant states. Thus, the tracial state is not ergodic. However,
if the $U(m)$-frame flow is ergodic this decomposition turns out to be ergodic.
\begin{pro} \label{proerg}
 Suppose that the $U(m)$-frame flow on $U_m X$ is ergodic. Let $P$ be
 one of the projections
 \begin{gather*}
   P_{p,q,k} P_i,\\
   1 \leq i \leq 4,\\
   0 \leq k \leq \mathrm{min}(p,q).
 \end{gather*}
 Then the state $\omega_P$ on $C(T_1^*X;\pi^{*}\mathrm{End}(\wedge^{p,q} X))$
 defined by $\omega_P(a):=c_P \omega_{tr}(\sigma_P a)$ is ergodic.
 Here $c_P=\omega_{tr}(\sigma_P)^{-1}$.
\end{pro}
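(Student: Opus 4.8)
Following the strategy of \cite{JS}, the plan is to pass to the GNS representation of $\omega_P$, to lift the $C^*$-dynamical system to the unitary frame bundle $U_m X$, and there to reduce the ergodicity of $\omega_P$ to the ergodicity of the $U(m)$-frame flow together with one representation-theoretic statement about the range of $\sigma_P$.

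The (co)geodesic flow on $T_1^*X$ is covered by the $U(m)$-frame flow $\widetilde\phi_t$ on $U_m X$, which commutes with the structure action of the stabilizer $U(m-1)\subset U(m)$ of a unit covector. Hence there is a canonical identification
$$ C\bigl(T_1^*X;\pi^*\mathrm{End}(\wedge^{p,q}X)\bigr)\;\cong\;\bigl(C(U_m X)\otimes\mathrm{End}(\wedge^{p,q}\cbb^m)\bigr)^{U(m-1)}, $$
under which $\alpha_t$ becomes $\widetilde\phi_{-t}^*\otimes\mathrm{id}$ and $\omega_{tr}$ becomes $a\mapsto\frac{1}{\mathrm{rk}(\wedge^{p,q}X)}\int_{U_m X}\mathrm{tr}(a)\,d\nu$, where $\nu$ is the natural $\widetilde\phi_t$-invariant measure on $U_m X$ (the Liouville measure together with Haar on the fibres of $U_m X\to T_1^*X$); by hypothesis $\widetilde\phi_t$ is ergodic for $\nu$. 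As in the proof of Theorem \ref{main1}, the principal symbols of $L,L^*,L_t,L_t^*,Q,Q^*$, hence of $P=P_{p,q,k}P_i$, become constant after this lift, so $\sigma_P$ lifts to a fixed $U(m-1)$-equivariant orthogonal projection $P_0$ on $\wedge^{p,q}\cbb^m$ whose range is the subspace cut out by the corresponding summand of the model representation $\rho$.

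Next I would compute the GNS triple. Since $\omega_{tr}$ is a faithful tracial state and $\sigma_P^2=\sigma_P$, the map $a\mapsto a\sigma_P$ identifies $\mathcal H_{\omega_P}$ with the space of $L^2(\nu)$-functions $f:U_m X\to\mathrm{End}(\wedge^{p,q}\cbb^m)$ that are $U(m-1)$-equivariant and satisfy $f=fP_0$; the cyclic vector is the constant function $P_0$ and the GNS unitaries act by $f\mapsto f\circ\widetilde\phi_{-t}$. It suffices, by the usual argument that an invariant state whose GNS space has no non-scalar invariant vectors is extremal invariant and hence ergodic, to check that the only $\widetilde\phi_t$-invariant $f$ are the multiples of $P_0$. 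Applying the mean ergodic theorem together with the ergodicity of $\widetilde\phi_t$ entrywise shows that such an $f$ is $\nu$-a.e.\ a constant matrix $M$; equivariance and the relation $f=fP_0$ then give $M\in\mathrm{End}_{U(m-1)}(\wedge^{p,q}\cbb^m)$ with $M=MP_0$. So the assertion reduces to: the space of such $M$ is one dimensional, i.e.\ $\mathrm{Range}(P_0)$ is an irreducible $U(m-1)$-subrepresentation of $\wedge^{p,q}\cbb^m$ which exhausts its isotypic component.

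This last step is purely representation-theoretic and is where I expect the real work to lie. One uses the explicit model representation: $\sigma_{P_i}$ cuts out one of the four ``$e_1$-sectors'' of $\wedge^{p,q}\cbb^m$, on each of which the transversal Lefschetz operator $L_t$ acts as exterior multiplication by the Kähler form of the orthogonal $\cbb^{m-1}$, and $\sigma_{P_{p,q,k}}$ then selects the image under $L_t^k$ of a space of transversally primitive forms. Since the primitive $(a,b)$-forms of $\cbb^{m-1}$ are irreducible $U(m-1)$-modules, of multiplicity one in $\wedge^{\bullet}\cbb^{m-1}$, and $L_t$ carries such a module isomorphically onto its image, $\mathrm{Range}(P_0)$ is irreducible. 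The delicate point is the bidegree bookkeeping needed to pin down the $U(m-1)$-type of $\mathrm{Range}(P_0)$ precisely and to see that, inside a fixed $\wedge^{p,q}\cbb^m$, the atoms $P_{p,q,k}P_i$ separate these types, so that $\mathrm{Range}(P_0)$ is indeed the whole isotypic component; this is the only genuinely subtle part of the argument, and it is where the precise form of the transversal Lefschetz decomposition is used. Granting it, the invariant vectors in $\mathcal H_{\omega_P}$ reduce to $\cbb\,P_0=\cbb\,\xi_{\omega_P}$, and $\omega_P$ is ergodic.
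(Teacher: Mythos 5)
Your overall geometric strategy coincides with the paper's: identify $\pi^*\wedge^{p,q}X$ with an associated bundle $U_mX\times_{\hat\rho}\wedge^{p,q}\cbb^{2m}$ over $T_1^*X$, observe that $\sigma_P$ becomes a constant $U(m-1)$-equivariant projection $P_0$, use ergodicity of the $U(m)$-frame flow to force an invariant section to be a constant matrix $M$, and finish with Schur's lemma. The difficulty is in the functional-analytic reduction. You take as your criterion that the GNS space of $\omega_P$ --- which, as you correctly compute, is the \emph{column space} $\overline{L^2(\,\cdot\,)\sigma_P}$ --- contain no invariant vectors besides multiples of the cyclic vector $P_0$. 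That criterion is sufficient but strictly stronger than ergodicity (extremality), and it forces you to prove that $\mathrm{Range}(P_0)$ not only is an irreducible $U(m-1)$-module but \emph{exhausts its isotypic component} in $\wedge^{p,q}\cbb^{2m}$. This last statement, which you flag as the ``delicate point'' and then grant, is false in general: the same primitive type $\mathrm{Prim}^{p-k,q-k}(\cbb^{m-1})$ occurs both in the sector $\wedge^{p,q}\cbb^{2m-2}$ at transversal Lefschetz level $k$ and in the sector $\wedge^{p-1,q-1}\cbb^{2m-2}$ at level $k-1$, i.e.\ in the ranges of two \emph{different} atoms $P_{p,q,k}P_i$. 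Concretely, for $m=2$ and $(p,q)=(1,1)$ the trivial $U(1)$-type occurs twice in $\wedge^{1,1}\cbb^{4}$, once as $\cbb\, w_2\wedge\bar w_2$ (range of $\sigma_{P_{1,1,1}P_2}$) and once as $\cbb\, w_1\wedge\bar w_1$ (range of $\sigma_{P_{1,1,0}P_1}$); the constant intertwiner sending one line to the other is a non-scalar invariant vector in your GNS space. So the reduction you propose cannot be completed.

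The repair is to use the correct, weaker criterion, which is the one the paper takes from the Appendix of \cite{JS}: extremality of $\omega_P$ among invariant states is equivalent to the statement that every positive $\beta_t$-invariant element of the \emph{corner} $\sigma_P\, L^\infty(T_1^*X,\pi^*\mathrm{End}(\wedge^{p,q}X))\,\sigma_P \cong L^\infty(T_1^*X,\mathrm{End}\,F)$, $F=\mathrm{Range}(\sigma_P)$, is proportional to $\sigma_P$ (this corner is the commutant in your GNS representation, and Radon--Nikodym for dominated invariant states lands there, not in the column space). For elements of the corner the constant matrix $M$ is an \emph{endomorphism} of the single fibre $\mathrm{Range}(P_0)$ commuting with $U(m-1)$, so plain irreducibility of $\mathrm{Range}(P_0)$ --- which does hold, by the Lefschetz decomposition of $\wedge^{p',q'}\cbb^{2m-2}$ and irreducibility of primitive forms under $U(m-1)$, exactly as you argue --- already gives $M\in\cbb\, P_0$ by Schur. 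With that substitution the rest of your argument is sound and agrees with the paper's proof; no multiplicity-one statement is needed.
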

\begin{proof}
 The bundle $\wedge^{p,q} X$ can be naturally identified with the associated
 bundle $U_m X \times_{\hat \rho_1} \wedge^{p,q} \mathbb{C}^{2m}$, where $\hat\rho_1$ is the
 representation of $U(m)$ on
 $$
  \wedge^{p,q} \mathbb{C}^{2m} = \wedge^p \mathbb{C}^m \otimes \wedge^q \overline{\mathbb{C}}^m.
 $$
 obtained from the canonical representation on $\mathbb{C}^m$.
 The pull back $\pi^* \wedge^{p,q} X$ of $\wedge^{p,q} X$ can analogously be
 identified with the associated bundle
 \begin{gather}
  U_m X \times_{\hat \rho} \wedge^{p,q} \mathbb{C}^{2m},
 \end{gather}
 where $\hat \rho$ is the restriction of $\hat \rho_1$ to the subgroup $U(m-1)$.
 Since the first vector in $\mathbb{C}^m$ is invariant under the action of
 $U(m-1)$ we have the decomposition
 $$
  \wedge^{p,q} \mathbb{C}^{2m} = \wedge^{p,q} \mathbb{C}^{2m-2} \oplus
  \wedge^{p-1,q} \mathbb{C}^{2m-2} \oplus \wedge^{p,q-1} \mathbb{C}^{2m-2}
  \oplus \wedge^{p-1,q-1} \mathbb{C}^{2m-2}
 $$
 into invariant subspaces. The projections onto these subspaces in each
 fiber is exactly given by the principal symbols $\sigma_{P_i}$ of the
 projections $P_i$. The representation of $U(m-1)$ on $\wedge^{p',q'} \mathbb{C}^{2m-2}$
 may still fail to be irreducible. However, it is an easy exercise in representation theory
 (c.f. \cite{MR1153249}, Exercise 15.30, p. 226) to show that the kernel of
 $\sigma_{L_t^*}$ in each fiber is an irreducible representation of $U(m-1)$. Thus,
 $\sigma_P$ projects onto a sub-bundle $F$ of $\pi^* \wedge^{p,q} X$ that is associated
 with an irreducible representation $\rho$ of $U(m-1)$, i.e.
 \begin{gather}
  F \cong U_m X \times_\rho V_\rho.
 \end{gather}
 This identification intertwines the $U(m)$-frame flow on $U_m X$ and the flow $\beta_t$.
 To show that the state $\omega_P$ is ergodic it is enough to show that
 any positive $\beta_t$-invariant element $f$ in
 $\sigma_P L^\infty(T_1^*X,\pi^* \mathrm{End} \wedge^{p,q} X) \sigma_P$
 is proportional to $\sigma_P$ (see \cite{JS}, Appendix). Under the above
 identification $f$ gets identified with a function $\hat f \in L^\infty(U_m X; V_\rho)$
 which satisfies
 \begin{gather}
  \hat f(x g) =\rho(g) \hat f(x) \rho(g)^{-1}, \quad x \in U_m X, g \in U(m-1).
 \end{gather}
 If such a function is invariant under the $U(m)$-frame flow it follows
 from ergodicity of the $U(m)$-frame flow
 that it is constant almost everywhere. So almost everywhere $\hat f(x)= M$,
 where $M$ is a matrix. By the above transformation rule $M$ commutes
 with $\rho(g)$. Since $\rho$ is irreducible it follows that $M$ is a multiple
 of the identity matrix. Thus, $\hat f$ is proportional to the identity
 and consequently, $f$ is proportional to $\sigma_P$.
\end{proof}

Applying the abstract theory developed in \cite{MR1384146} the same argument
as in \cite{JS} can be applied to obtain

\begin{theorem}
 Let $P$ be one of the projections
 \begin{gather*}
   P_{p,q,k} P_i,\\
      1 \leq i \leq 4,\\
   0 \leq k \leq \mathrm{min}(p,q).
 \end{gather*}
 and let $(\phi_j)$ be an orthonormal basis in $\mathrm{Rg}(P)$ with
 \begin{gather}
  \Delta \phi_j = \lambda_j \phi_j,\\ \nonumber
  \lambda_j \nearrow \infty.
 \end{gather}
 If the $U(m)$-frame flow on $U_m X$is ergodic, then
 quantum ergodicity holds in the sence that
 \begin{gather}
  \frac{1}{N} \sum_{j=1}^N |\langle \phi_j,A \phi_j \rangle - \omega_P(\sigma_A)| \to 0,
 \end{gather}
 for any $A \in \PsDO_{cl}^0(X,\wedge^{p,q} X)$.
\end{theorem}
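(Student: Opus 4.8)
The plan is to run the $C^*$-algebraic version of the Shnirelman--Zelditch--Colin de Verdière argument in the form developed for bundle-valued operators in \cite{JS} and abstractly in \cite{MR1384146}, using Proposition \ref{proerg} as the ergodicity input. First I would reduce to the compressed algebra. Each of the projections $P=P_{p,q,k}P_i$ is a classical $\Psi\mathrm{DO}$ of order $0$ commuting with $\Delta$ (as recorded in the previous sections), so $\mathrm{Rg}(P)$ is spanned by eigenfunctions of $\Delta$ and the compression is compatible with the symbol calculus. For $A\in\PsDO_{cl}^0(X,\wedge^{p,q}X)$ and $\phi_j\in\mathrm{Rg}(P)$ one has $\langle\phi_j,A\phi_j\rangle=\langle\phi_j,PAP\phi_j\rangle$, and since $\sigma_P$ is an idempotent, $\omega_P(\sigma_A)=\omega_P(\sigma_{PAP})$; hence it suffices to treat $B:=PAP$, an element of the compressed $C^*$-algebra $\mathcal A_P:=\overline{\sigma_P\,C(T_1^*X;\pi^*\mathrm{End}\,\wedge^{p,q}X)\,\sigma_P}$, which has unit $\sigma_P$ and carries the state $\omega_P$ (note $\omega_P(\sigma_P)=1$) and the flow $\beta_t$.

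Next I would invoke Egorov's theorem. On $\mathrm{Rg}(P)\subset\ker\Delta^\perp$ the unitary group $U(t)=\exp(\I t\sqrt{\Delta})$ is well defined, and Egorov's theorem for Laplace-type operators on a vector bundle says that $U(-t)BU(t)$ is again a $\Psi\mathrm{DO}$ of order $0$ whose principal symbol is $\sigma_B$ transported along the geodesic flow of $|\xi|_g$ on $T_1^*X$ with the $\mathrm{End}$-part transported by Levi-Civita parallel transport; because $J$ is covariantly constant this is exactly the flow $\beta_t$ appearing in Proposition \ref{proerg}, which there was identified (via the associated-bundle description) with the $U(m)$-frame flow on $U_mX$. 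Therefore, under the hypothesis that the $U(m)$-frame flow is ergodic, Proposition \ref{proerg} asserts precisely that $\omega_P$ is a $\beta_t$-ergodic state on $\mathcal A_P$. Since $U(t)$ commutes with $P$, the time average $\langle B\rangle_T:=\tfrac1T\int_0^T U(-t)BU(t)\,dt$ lies in the compressed calculus and has symbol $\langle\sigma_A\rangle_T:=\tfrac1T\int_0^T (\sigma_A\text{ transported by }\beta_t)\,dt$.

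Then comes the variance estimate. Put $R_T:=\langle B\rangle_T-\omega_P(\sigma_A)P = PR_TP$, a $\Psi\mathrm{DO}$ of order $0$ with $\sigma_{R_T}=\langle\sigma_A\rangle_T-\omega_P(\sigma_A)\sigma_P$, and use $\langle\phi_j,A\phi_j\rangle=\langle\phi_j,\langle B\rangle_T\phi_j\rangle$. Two applications of Cauchy--Schwarz give
\[
\Big(\frac1N\sum_{j\le N}\big|\langle\phi_j,A\phi_j\rangle-\omega_P(\sigma_A)\big|\Big)^2\le\frac1N\sum_{j\le N}\langle\phi_j,R_T^*R_T\phi_j\rangle .
\]
Applying the local Weyl law (heat-trace expansion) for bundle-valued $\Psi\mathrm{DO}$s to $R_T^*R_T$, normalized by the counting function $N_P(\lambda)=\mathrm{tr}\,(\Pi_{[0,\lambda]}P)$ of eigenvalues inside $\mathrm{Rg}(P)$, yields
\[
\lim_{N\to\infty}\frac1N\sum_{j\le N}\langle\phi_j,R_T^*R_T\phi_j\rangle=\omega_P\big(|\langle\sigma_A\rangle_T-\omega_P(\sigma_A)\sigma_P|^2\big).
\]
Finally, in the GNS space $L^2(\mathcal A_P,\omega_P)$ the von Neumann mean ergodic theorem together with $\beta_t$-ergodicity of $\omega_P$ gives $\langle\sigma_A\rangle_T\to\omega_P(\sigma_A)\sigma_P$ in $L^2(\omega_P)$, so the right-hand side tends to $0$ as $T\to\infty$. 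Taking $N\to\infty$ and then $T\to\infty$ proves $\frac1N\sum_{j\le N}|\langle\phi_j,A\phi_j\rangle-\omega_P(\sigma_A)|\to0$; the passage from this Cesàro statement to a density-one subsequence is routine.

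I do not expect a genuinely new obstacle here: the nontrivial content has been front-loaded into Proposition \ref{proerg}, and the remainder is the standard machinery packaged in \cite{MR1384146,JS}. The points that require care are (i) that Egorov's theorem and the local Weyl law are used in the bundle-valued, Laplace-type setting, with $\sqrt\Delta$ taken on $\ker\Delta^\perp$ so that $U(t)$ acts sensibly on $\mathrm{Rg}(P)$; (ii) that compression by the order-zero projection $P$ (which commutes with $\Delta$) is compatible with the symbol calculus, so $\mathcal A_P$ is a bona fide $\beta_t$-invariant $C^*$-subalgebra with unit $\sigma_P$; and (iii) the identification of the flow produced by Egorov with the flow $\beta_t$ for which $\omega_P$ was shown ergodic. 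Once these are in place, the theorem follows by quoting the abstract quantum ergodicity theorem of \cite{MR1384146} with the ergodic decomposition of $\omega_{tr}$ supplied by Proposition \ref{proerg}.
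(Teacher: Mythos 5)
Your proposal is correct and follows essentially the same route as the paper, which proves this theorem simply by invoking the abstract quantum ergodicity machinery of \cite{MR1384146} and the argument of \cite{JS}, with Proposition \ref{proerg} supplying the ergodicity of the state $\omega_P$. You have merely unpacked that citation into the standard time-average/Egorov/local-Weyl-law/mean-ergodic-theorem chain, which is exactly what those references contain.
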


Since for co-closed forms primitivity and transversal primitivity are
equivalent there is a natural gauge condition that manages without the above
heavy notation.
\begin{cor} \label{cor53}
 Let $\phi_j$ be a complete sequence of primitive co-closed $(p,q)$-forms
 such that
 \begin{gather}
  \Delta \phi_j = \lambda_j \phi_j,\\ \nonumber
  \lambda_j \nearrow \infty.
 \end{gather}
 Then, if the $U(m)$-frame flow on $U_m X$is ergodic,
 quantum ergodicity holds in the sence that
 \begin{gather}
  \frac{1}{N} \sum_{j=1}^N |\langle \phi_j,A \phi_j \rangle - \omega_P(\sigma_A)| \to 0,
 \end{gather}
 for any $A \in \PsDO_{cl}^0(X,\wedge^{p,q} X)$, where $P=P_{p,q,0} P_2$ is
 the orthogonal projection onto the space of primitive co-closed $(p,q)$-forms.
\end{cor}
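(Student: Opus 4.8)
The plan is to obtain the corollary as the special case $i=2$, $k=0$ of the preceding theorem, applied to the projection $P=P_{p,q,0}P_2$; the only point needing verification is that $\mathrm{Rg}(P)$ is exactly the span of the primitive co-closed $(p,q)$-eigenforms of $\Delta$ with nonzero eigenvalue, and that $P$ is a genuine orthogonal projection. For the latter I would first observe that $P_{p,q,0}$ and $P_2$ commute: by construction $L_t=L-iQ$ commutes with $\partial,\partial^*,\bar\partial,\bar\partial^*$ and with $\Delta$, hence with $P_2=4\Delta^{-2}\partial^*\bar\partial^*\bar\partial\partial=Q^*Q$, and passing to adjoints $L_t^*$ commutes with $P_2$ as well; since $P_2$ is self-adjoint it then commutes with the orthoprojection $P_{p,q,0}$ onto $\ker L_t^*$ in the $(p,q)$-forms, so $P=P_{p,q,0}P_2$ is the orthogonal projection onto $\mathrm{Rg}(P_{p,q,0})\cap\mathrm{Rg}(P_2)$.

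Next I would identify $\mathrm{Rg}(P_2)$ with the space of co-closed $(p,q)$-forms orthogonal to the harmonics. A $\Delta$-eigenform $\phi$ with eigenvalue $\lambda\ne 0$ is orthogonal to the harmonic forms, so $P_H+\sum_{i=1}^4P_i=1$ puts it in $\bigoplus_{i=1}^4\mathrm{Rg}(P_i)$; co-closedness means $\partial^*\phi=\bar\partial^*\phi=0$, i.e. $\phi\perp\mathrm{Rg}(\partial)$ and $\phi\perp\mathrm{Rg}(\bar\partial)$, and since $\mathrm{Rg}(P_1),\mathrm{Rg}(P_3)\subseteq\mathrm{Rg}(\partial)$ and $\mathrm{Rg}(P_1),\mathrm{Rg}(P_4)\subseteq\mathrm{Rg}(\bar\partial)$ (the last inclusion using $\{\bar\partial,\partial^*\}=0$), this forces $\phi\in\mathrm{Rg}(P_2)$. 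Conversely $\mathrm{Rg}(P_2)=\mathrm{Rg}(\partial^*)\cap\mathrm{Rg}(\bar\partial^*)$ lies in the co-closed forms because $\partial^*\partial^*=\bar\partial^*\bar\partial^*=0$, and it is orthogonal to the harmonics since $P_2=Q^*Q$ annihilates them.

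Then I would check that transversal primitivity and ordinary primitivity agree on this subspace: if $\phi\in\mathrm{Rg}(P_2)$ then $\bar\partial^*\phi=0$, and since $\Delta$ is central $Q^*=2\Delta^{-1}\partial^*\bar\partial^*$, so $Q^*\phi=0$ and $L_t^*\phi=L^*\phi+iQ^*\phi=L^*\phi$; hence $\ker L_t^*\cap\mathrm{Rg}(P_2)=\ker L^*\cap\mathrm{Rg}(P_2)$, which together with the two previous steps identifies $\mathrm{Rg}(P)$ with the space of primitive co-closed $(p,q)$-forms orthogonal to the harmonics. To conclude I would apply the preceding theorem with this $P$: an orthonormal eigenbasis $(\phi_j)$ of $\mathrm{Rg}(P)$ with $\lambda_j\nearrow\infty$ is precisely a complete sequence of primitive co-closed $(p,q)$-forms as in the statement, $\omega_P$ is the state furnished by Proposition \ref{proerg}, and the theorem's conclusion $\frac{1}{N}\sum_{j=1}^N|\langle\phi_j,A\phi_j\rangle-\omega_P(\sigma_A)|\to 0$ for $A\in\PsDO_{cl}^0(X,\wedge^{p,q}X)$ is exactly the assertion.

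There is no analytic obstacle beyond what the theorem and Proposition \ref{proerg} already supply. The only step requiring care is the bookkeeping in the middle two paragraphs: checking that the four ranges $\mathrm{Rg}(P_i)$ sort correctly against $\ker\partial^*\cap\ker\bar\partial^*$, and — the crucial point — that the correction term $-iQ$ distinguishing $L_t$ from $L$ disappears on the co-closed subspace because $Q^*$ vanishes there. Once this is in place the corollary is immediate.
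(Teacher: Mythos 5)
Your proposal is correct and follows the same route the paper intends: the corollary is the special case $P=P_{p,q,0}P_2$ of the preceding theorem, justified by the observation (stated without proof in the paper) that $\mathrm{Rg}(P_2)$ consists of the non-harmonic co-closed $(p,q)$-forms and that $Q^*$ vanishes there, so $L_t^*=L^*$ and transversal primitivity coincides with primitivity. Your verification of these facts from the commutation relations and the decomposition $P_H+\sum_i P_i=1$ is sound and simply fills in details the paper leaves to the reader.
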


\section{Quantum ergodicity for Spin$^\cbb$-Dirac operators}

In this section we consider the quantum ergodicity for  Dirac type operators rather than Laplace operators.
The complex structure on K\"ahler manifolds gives rise to the so-called canonical and anti-canonical Spin$^\cbb$-
structures. The spinor bundle of the latter can be canonically identified with the bundle $\wedge^{0,*} X$
in such a way that the Dirac operator gets identified with the so-called Dolbeault Dirac operator. Other Spin$^\cbb$-
structures (e.g. the canonical one) can then be obtained by twisting with a holomorphic line bundle. Let us quickly describe
the construction of the twisted Dolbeault operator.

Let $L$ be a holomorphic line bundle. Then the twisted Dolbeault complex is given by
$$
\begindc{0}[7]
 \obj(15,10)[A]{$\ldots$}
 \obj(25,10)[B]{$\wedge^{0,k-1}X \otimes L$}
 \obj(40,10)[C]{$\wedge^{0,k}X \otimes L$}
 \obj(50,10)[D]{$\ldots$}
 \mor{A}{B}{$\bar \partial$}
 \mor{B}{C}{$\bar \partial$}
 \mor{C}{D}{$\bar \partial$}
\enddc
$$
This is an elliptic complex and
the  twisted Dolbeault Dirac operator is defined by
\begin{gather}
 D=\sqrt{2}(\bar \partial + \bar \partial^*).
\end{gather}
As mentioned above this operator is the Dirac operator of a Spin$^{\cbb}$-structure
on $X$ where the spinor bundle is identified with  $S=\wedge^{0,*} X \otimes L$. Note that
Spin structures on $X$ are in one-one correspondence
with square roots of  the canonical bundle $K=\wedge^{n,0} T X$, i.e.
with holomorphic line bundles $L$ such that $L \otimes L = K$.
In this case the Dirac operator $D$ is
exactly the twisted Dolbeault Dirac operator.

The twisted Dolbeault Dirac operator is a first order elliptic formally self-adjoint differential operator. It is therefore
self-adjoint on the domain $H^1(X;\wedge^{0,*} X \otimes L)$ of sections in the first Sobolev space. As $D$
is a first order differential operator its spectrum is unbounded from both sides.

The Dolbeault Laplace operator is  given by $2(\bar \partial \bar \partial^* +
\bar \partial^* \bar \partial) =  D^2$ and will be denoted by $\Delta^L$.
The Hodge decomposition is
\begin{gather}
 C^\infty(X;\wedge^{0,k} X \otimes L)=\\ \nonumber
 \mathrm{ker}(\Delta^L_k) \oplus
 \bar \partial  C^\infty(X;\wedge^{0,k-1} X \otimes L) \oplus \bar
 \partial^*  C^\infty(X;\wedge^{0,k+1} X \otimes L).
\end{gather}
Note that the Dirac operator leaves $\mathrm{ker}(\Delta^L_k)$
invariant since it commutes with $\Delta^L$. Moreover, $D$ maps $\bar
\partial  C^\infty(X;\wedge^{0,k-1} X \otimes L)$ to $\bar
\partial^*  C^\infty(X;\wedge^{0,k} X \otimes L)$ and $\bar
\partial^*  C^\infty(X;\wedge^{0,k} X \otimes L)$ to $\bar \partial
C^\infty(X;\wedge^{0,k-1} X \otimes L)$. Therefore, the subspaces
\begin{gather}
 \mathcal{H}^k=\bar \partial  C^\infty(X;\wedge^{0,k-1} X \otimes L)
  \oplus \bar \partial^*  C^\infty(X;\wedge^{0,k} X \otimes L)
\end{gather}
are invariant subspaces for the Dirac operator. The orthogonal
projections $\Pi_k$ onto the closures of $\mathcal{H}^k$ are clearly zero
order pseudodifferential operators which commute with the Dirac operator.

Let $\overline{\PsDO_{cl}^0(X;S)}$ be the norm closure of the
$*$-algebra of zero order pseudodifferential operators in $\mathcal{B}(L^2(X,S))$.
Then the symbol map extends to an isomorphism
\begin{gather}
 \overline{\PsDO_{cl}^0(X;S)}/\mathcal{K} \cong C(T_1^* X,\pi^* \mathrm{End}(S)).
\end{gather}
By theorem 1.4 in \cite{JS} $\overline{\PsDO_{cl}^0(X;S)}$ is invariant
under the automorphism group $\alpha_t(A):=e^{-i (\Delta^L)^{1/2} t} A e^{+i (\Delta^L)^{1/2} t}$
and the induced flow $\beta_t$ on $C(T_1^* X,\pi^* \mathrm{End}(S))$
is the extension of the geodesic flow defined by parallel translation along the fibers.

As in the analysis for the Laplace-Beltrami operator we have to consider the tracial state
\begin{gather}
 \omega_{tr}(a)=\frac{1}{\mathrm{rk}(S)} \int_{T_1^*X} \mathrm{tr}(a(\xi)) d \xi,
\end{gather}
As already remarked in \cite{JS} this state
is not ergodic with respect to $\beta_t$ since it has a decomposition
\begin{gather} \label{decompspec}
 \omega_{tr}(a)=\frac{1}{2} \omega_+(a) + \frac{1}{2} \omega_-(a),
\end{gather}
where
\begin{gather}
 \omega_\pm(a)=\omega_{tr}((1 \pm \sigma_{\mathrm{sign}(D)}) a)
\end{gather}
On Spin$^{\cbb}$-manifolds with ergodic frame flows the states
$\omega_\pm$ were shown in \cite{JS} to be ergodic. On K\"ahler manifolds
of complex dimension  greater than one they are not ergodic since we have a further decomposition
\begin{gather}
 \omega_\pm(a)=\sum_{k} \omega_\pm(\sigma_{\Pi_k} a)
\end{gather}
into invariant states.

\begin{pro}\label{pro:dirac}
 Suppose that the $U(m)$-frame flow on $U_m X$ is ergodic. Then the states
 $\omega_\pm^k:= c_k \omega_\pm(\sigma_{\Pi_k} a)$
 are ergodic with respect to the group $\beta_t$. Here $c_k:=\omega_\pm(\sigma_{\Pi_k})^{-1}$.
\end{pro}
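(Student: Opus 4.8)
The plan is to follow the template of Proposition~\ref{proerg} essentially verbatim, adapting it from the $(p,q)$-form setting to the Dolbeault spinor setting $S = \wedge^{0,*}X \otimes L$. First I would identify the spinor bundle $S$, and more importantly its pull-back $\pi^* S$ to $T_1^*X$, as an associated bundle to the $U(m)$-frame bundle $U_m X$. Concretely, $\wedge^{0,*}X$ is associated to the $U(m)$-representation $\wedge^* \overline{\cbb}^m$, twisting by the line bundle $L$ changes the representation by a fixed character-type factor that does not affect irreducibility considerations, and the pull-back $\pi^* S$ is then associated to the restriction of this representation to the subgroup $U(m-1)$ stabilizing the first frame vector. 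Because the first antiholomorphic covector is $U(m-1)$-invariant, one gets the splitting $\wedge^{0,k}\cbb^{2m} = \wedge^{0,k}\cbb^{2m-2} \oplus \wedge^{0,k-1}\cbb^{2m-2}$, and the two summands are exactly the principal-symbol images of the projections coming from the decomposition $\mathcal{H}^k$; this is where the $\Pi_k$ enter, and the $\omega_\pm$ splitting refines this further by the half-spinor grading.

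Next I would check that each piece $\sigma_{\Pi_k}\,\tfrac{1}{2}(1 \pm \sigma_{\mathrm{sign}(D)})$ projects in each fiber onto a bundle associated to an \emph{irreducible} representation of $U(m-1)$. The relevant representation-theoretic fact is again that the kernel of the transversal Lefschetz-type operator (equivalently, a primitive component of $\wedge^* \overline{\cbb}^{m-1}$) is $U(m-1)$-irreducible, which is the exercise in \cite{MR1153249} cited in the proof of Proposition~\ref{proerg}. The half-spinor projections $\tfrac{1}{2}(1\pm\sigma_{\mathrm{sign}(D)})$ correspond to the even/odd grading on $\wedge^*\overline{\cbb}^m$, and this grading is preserved by $U(m-1)$, so intersecting with it keeps things inside a single irreducible. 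Thus $\sigma_P$ with $P = \Pi_k \cdot \tfrac{1}{2}(1\pm\mathrm{sign}(D))$ projects onto a subbundle $F \cong U_m X \times_\rho V_\rho$ with $\rho$ irreducible, and this identification intertwines the $U(m)$-frame flow with $\beta_t$.

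With the bundle identified, the ergodicity argument is formally identical to the one in Proposition~\ref{proerg}: a positive $\beta_t$-invariant element $f \in \sigma_P L^\infty(T_1^*X;\pi^*\mathrm{End}(S))\sigma_P$ corresponds to $\hat f \in L^\infty(U_m X; \mathrm{End}(V_\rho))$ with $\hat f(xg) = \rho(g)\hat f(x)\rho(g)^{-1}$; invariance under the ergodic $U(m)$-frame flow forces $\hat f$ to be constant a.e., equal to some matrix $M$ commuting with all $\rho(g)$; Schur's lemma makes $M$ a scalar; hence $f \propto \sigma_P$. By the criterion in \cite{JS}, Appendix, this says precisely that $\omega_P$, i.e. $\omega_\pm^k$, is ergodic. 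Finally one normalizes by $c_k = \omega_\pm(\sigma_{\Pi_k})^{-1}$ so that the state has unit mass.

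The main obstacle I anticipate is not the ergodicity step, which is formal, but the bookkeeping in the bundle identification: verifying that $\sigma_{\mathrm{sign}(D)}$ really does correspond to the parity grading on the fiber $\wedge^*\overline{\cbb}^m$ (so that $\omega_\pm$ splits the model representation the way claimed), and checking that after restricting to $U(m-1)$ and intersecting the $\Pi_k$-piece with a half-spinor piece one genuinely lands on a single $U(m-1)$-irreducible rather than a sum of two. One must confirm that the symbol of $D$ is, up to the standard Clifford-module normalization, $\sqrt{2}(\bar\partial + \bar\partial^*)$-type exterior-plus-interior multiplication by the cotangent direction, so that its sign on $\pi^* S$ is exactly the $\zbb/2$-grading operator times an invariant factor. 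Once these identifications are pinned down, the statement follows.
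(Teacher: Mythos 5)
Your overall strategy is the paper's: realize $\pi^*\wedge^{0,k}X$ as a bundle associated to the $U(m-1)$-principal bundle $U_mX\to T_1^*X$, show that the symbol of each projection $\tfrac12(1\pm\mathrm{sign}(D))\Pi_k$ cuts out a sub-bundle associated to an \emph{irreducible} $U(m-1)$-representation, and then run the standard argument (invariant section $\leadsto$ equivariant function on $U_mX$ $\leadsto$ constant by ergodicity $\leadsto$ scalar by Schur). The last step and the normalization are fine, and your remark that the twist by $L$ drops out of $\mathrm{End}(S)$ is also how the paper handles it.

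The genuine gap is the middle step, and it is exactly the point you flagged at the end: you assert that $\tfrac12(1\pm\sigma_{\mathrm{sign}(D)})$ "corresponds to the even/odd grading on $\wedge^*\overline{\cbb}^m$" and that intersecting the $\Pi_k$-piece with it "keeps things inside a single irreducible." This identification is false: $D=\sqrt2(\bar\partial+\bar\partial^*)$ is \emph{odd} with respect to the $\zbb/2$-grading of $\wedge^{0,*}X$, so $\sigma_D(\xi)$ (hence $\sigma_{\mathrm{sign}(D)}(\xi)$) anticommutes with the grading operator and cannot equal it; its $\pm1$ eigenspaces are not graded subspaces. Had your identification been correct, the resulting projections would be the parity pieces of $\sigma_{\Pi_k}$, i.e.\ the symbols of the projections onto $\bar\partial C^\infty(\wedge^{0,k-1}\otimes L)$ and $\bar\partial^*C^\infty(\wedge^{0,k}\otimes L)$ separately --- an invariant decomposition, but not the one defining $\omega_\pm^k$, so you would end up proving ergodicity of the wrong states. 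The correct picture, which is what the paper uses, is this: $\sigma_{\Pi_k}(\xi)$ projects onto \emph{two isomorphic copies} of the irreducible $U(m-1)$-module $\wedge^{k-1}\overline{\cbb}^{m-1}$, namely $\bar\xi\wedge\wedge^{k-1}\overline{\cbb}^{m-1}\subset\wedge^k\overline{\cbb}^m$ and $\wedge^{k-1}\overline{\cbb}^{m-1}\subset\wedge^{k-1}\overline{\cbb}^m$; the symbol $\sigma_D(\xi)$ is an equivariant unitary intertwiner swapping them; and $\tfrac12(1\pm\sigma_{\mathrm{sign}(D)})\sigma_{\Pi_k}$ projects onto the \emph{graph} $\{v\pm u(v)\}$ of $\pm$ that intertwiner, which is a single copy of the irreducible. (In particular the representation-theoretic input here is only that exterior powers of the standard representation of $U(m-1)$ are irreducible; the harder "kernel of $\sigma_{L_t^*}$" exercise from Proposition \ref{proerg} is not needed.) Once $F$ is identified this way, the rest of your argument goes through verbatim.
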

\begin{proof}
 Let $R$ be one of the projections $\frac{1 \pm \mathrm{sign}(D)}{2}\Pi_k$ and let
 $\sigma_R$ be its principal symbol. Hence, $\sigma_R$ is a projection in
 \begin{gather}
  C(T_1^*X,\pi^* \mathrm{End}(S)) \cong C(T_1^*X,\pi^* \mathrm{End}(\wedge^{0,*}X)).
 \end{gather}
 We need to show that $a \to \omega_{tr}(\sigma_R)^{-1}\omega_{tr}(\sigma_R a)$
 is an ergodic state. As in the proof of Proposition \ref{proerg} this is equivalent to showing that
 any positive element in $L^\infty(T_1^*X,\pi^*\mathrm{End}(\wedge^{0,k}X)) \sigma_R$
 is proportional to $\sigma_R$.
 A positive element in $L^\infty(T_1^*X,\pi^* \mathrm{End}(\wedge^{0,k}X)) \sigma_R$
 is also in $$\sigma_R L^\infty(T_1^*X,\pi^* \mathrm{End}(\wedge^{0,k}X)) \sigma_R=
 L^\infty(T_1^*X,\mathrm{End} F),$$ where $F$ is the sub-bundle of $\pi^*\wedge^{0,k}X$
 onto which $\sigma_R$ projects. Since $\sigma_R$ is $\beta_t$-invariant the flow
 clearly restricts to a flow on the sub-bundle $\mathrm{End}F$ of
 $\pi^* \mathrm{End}(\wedge^{0,k}X)$. We will show that
 under the stated assumptions an invariant element in $L^\infty(T_1^*X,\mathrm{End}F)$
 is proportional to the identity in $L^\infty(T_1^*X,\mathrm{End}F)$.
 Note that $\pi^* \wedge^{0,k} X$ is naturally identified with an associated
 bundle
 \begin{gather}
  \pi^* \wedge^{0,k} X \cong U_m X \times_{\wedge^k \tilde \rho} \wedge^k
  \overline \cbb^{m},
 \end{gather}
 where $\tilde \rho$ is the restriction of the anticanonical representation
 of $U(m)$ on $\bar \cbb^m$ to $U(m-1)$. Here we view $U_m X$
 as a $U(m-1)$-principal fiber bundle over $T_1^* X$.
 Note that $\wedge^k \tilde \rho$ is not irreducible but splits into a direct
 sum of two irreducible representations. This corresponds to the splitting
 $\wedge^k (\bar\cbb^{m-1} \oplus \bar\cbb)=\wedge^{k-1} \bar\cbb^{m-1} \oplus
 \wedge^{k} \bar\cbb^{m-1}$. Under the above correspondence the projections
 onto the sub-representations are exactly the principal symbols of the projections
 onto $\mathrm{Rg}(\bar \partial)$ and $\mathrm{Rg}(\bar \partial^*)$. One
 finds that $F$ is associated with a representation $\rho$ of $U(m-1)$
 \begin{gather}
  F \cong U_m X \times_{\rho} V_\rho,
 \end{gather}
 where
 $\rho$ is equivalent to $\wedge^{k} \hat \rho$ and $\hat \rho$ is the anticanonical
 representation of $U(m-1)$. Therefore, $\rho$ is irreducible.
 Hence, elements in $f  \in L^\infty(T_1^*X,\mathrm{End}F)$ can be identified
 with functions $\hat f \in L^\infty(U_m X,\mathrm{End}V_\rho)$ that satisfy the
 transformation property
 \begin{gather}
  \hat f(x g)=\rho(g) \hat f(x) \rho(g)^{-1}, \quad x \in U_m X, g \in U(m-1).
 \end{gather}
 This identification intertwines the pullback of the frame flow with
 $\beta_t$. Now exactly in the same way as in the proof of Proposition
 \ref{proerg} we conclude that an invariant element in $L^\infty(T_1^*X,\mathrm{End}F)$
 must be a multiple of the identity. Thus, the corresponding state is ergodic.
\end{proof}

The above theorem gives rise to an ergodic decomposition of the tracial state
on the $C^*$-algebra of continuous sections of $\pi^* \mathrm{End}(S)$
which is different from the decomposition obtained from Prop. \ref{proerg}. The advantage
of this decomposition is that it is more suitable to study quantum ergodicity for the
Dirac operator. Namely, the decomposition (\ref{decompspec}) corresponds to the
splitting into negative energy and positive energy subspaces of the Dirac operator. Thus, if we
are interested in quantum limits of eigensections with positive energy we need to
decompose the state $\omega_+$ into ergodic components. This is achieved by Prop. \ref{pro:dirac}.

In the same way as in \cite{JS} one obtains
\begin{theorem}
 Let $X$ be a K\"ahler manifold and let $L$ be a holomorphic line bundle.
 Let $D$ be the associated $Spin^{\cbb}$-Dirac operator and let $L^2_+(X,S)$
 be the positive spectral subspace of $D$. Suppose that $(\phi_j)$ is an orthonormal
 basis in $\Pi_k L^2_+(X,S)$ such that
 \begin{gather}
  D \phi_j = \lambda_j \phi_j,\\ \nonumber
  \lambda_j \nearrow \infty.
 \end{gather}
 If the $U(m)$-frame flow on $U_m X$ is ergodic, then
 \begin{gather}
  \frac{1}{N} \sum_{j=1}^N |\langle \phi_j,A \phi_j \rangle - \omega_k(\sigma_A)| \to 0,
 \end{gather}
 for any $A \in \PsDO_{cl}^0(X,S)$. Here $\omega_k$ is the state on
 $C(T_1^* X,\pi^* \mathrm{End}(S))$ defined by
 \begin{gather}
  \omega_k(a)= C \int_{T_1^*X} \mathrm{tr}
  \left((1+\sigma_{\mathrm{sign}(D)}(\xi))\sigma_{\Pi_k}(\xi) a(\xi)\right) d \xi,
 \end{gather}
 where integration is with respect to the normalized Liouville measure
 and $C$ is fixed by the requirement that $\omega_k(1)=1$.
\end{theorem}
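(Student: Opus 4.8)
The plan is to reduce the statement to the abstract quantum ergodicity criterion of \cite{MR1384146} combined with the ergodicity of the limiting state, which is essentially the state $\omega_+^k$ already produced by Proposition \ref{pro:dirac}. First I would set up the operator-algebraic framework exactly as in the paragraph preceding that proposition: by Theorem 1.4 of \cite{JS} the norm closure $\overline{\PsDO_{cl}^0(X;S)}$ is invariant under the automorphism group $\alpha_t(A)=e^{-i(\Delta^L)^{1/2}t}A\,e^{+i(\Delta^L)^{1/2}t}$, and the symbol map induces an isomorphism $\overline{\PsDO_{cl}^0(X;S)}/\mathcal{K}\cong C(T_1^*X,\pi^*\mathrm{End}(S))$ intertwining $\alpha_t$ with the flow $\beta_t$ that lifts the geodesic flow by parallel transport. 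The projections $\Pi_k$ and the positive energy projection $\tfrac12(1+\mathrm{sign}(D))$ both commute with $D$, hence with $(\Delta^L)^{1/2}$, and both are classical zero order pseudodifferential operators; therefore $R:=\tfrac12(1+\mathrm{sign}(D))\Pi_k\in\overline{\PsDO_{cl}^0(X;S)}$ is an $\alpha_t$-invariant projection whose symbol $\sigma_R=\tfrac12(1+\sigma_{\mathrm{sign}(D)})\sigma_{\Pi_k}$ is a $\beta_t$-invariant projection in $C(T_1^*X,\pi^*\mathrm{End}(S))$.

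Next I would pass to the invariant subspace $\mathrm{Rg}(R)=\Pi_k L^2_+(X,S)$. On $L^2_+(X,S)$ the Dirac operator is positive and $D^2=\Delta^L$, so $D|_{\mathrm{Rg}(R)}=(\Delta^L)^{1/2}|_{\mathrm{Rg}(R)}$ has discrete spectrum $\lambda_j\nearrow\infty$ obeying the Weyl asymptotics obtained from the heat trace expansion of $\Delta^L$ weighted by $\sigma_R$, exactly as in the proof of Theorem \ref{main1}. The state $\omega_k$ of the statement is, under the identification $\mathrm{End}(S)\cong\mathrm{End}(\wedge^{0,*}X)$, precisely $\omega_{tr}(\sigma_R)^{-1}\omega_{tr}(\sigma_R\,\cdot\,)$, i.e. the state $\omega_+^k$ of Proposition \ref{pro:dirac}, and hence is ergodic for $\beta_t$ whenever the $U(m)$-frame flow on $U_m X$ is ergodic.

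Finally I would invoke the abstract theorem of \cite{MR1384146}, in the form used in \cite{JS}: for a $C^*$-dynamical system carrying an $\alpha_t$-invariant projection $R$ whose symbol state is ergodic, every orthonormal eigenbasis $(\phi_j)$ of the generator restricted to $\mathrm{Rg}(R)$ satisfies the Cesàro convergence $\tfrac1N\sum_{j\le N}|\langle\phi_j,A\phi_j\rangle-\omega_k(\sigma_A)|\to 0$ for all $A$ in the norm closure, and by density it suffices to verify this for $A\in\PsDO_{cl}^0(X;S)$. The two inputs this machinery requires are the local Weyl law $\tfrac1{N(\lambda)}\sum_{\lambda_j\le\lambda}\langle\phi_j,A\phi_j\rangle\to\omega_{tr}(\sigma_R\sigma_A)/\omega_{tr}(\sigma_R)$, which follows from the weighted heat trace expansion and Karamata's Tauberian theorem as in Theorem \ref{main1}, and the Egorov property, which is contained in the $\alpha_t$-invariance of $\overline{\PsDO_{cl}^0(X;S)}$.

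The genuinely nontrivial step — establishing ergodicity of the limiting state — has already been carried out in Proposition \ref{pro:dirac}; the main remaining obstacle is the bookkeeping of checking that $\Pi_k$ and $\mathrm{sign}(D)$ are bona fide zero order classical pseudodifferential operators with the asserted principal symbols, so that $\Pi_k L^2_+(X,S)$ genuinely fits into the pseudodifferential framework, together with the verification that $D|_{\mathrm{Rg}(R)}$ carries the Weyl asymptotics needed to run the Cesàro averaging. Once these routine points are settled the theorem follows verbatim from the argument in \cite{JS}.
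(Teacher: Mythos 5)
Your proposal is correct and follows essentially the same route as the paper, which proves this theorem by combining Proposition \ref{pro:dirac} with the abstract quantum ergodicity machinery of \cite{MR1384146} exactly as in \cite{JS}; the paper's own proof is literally the remark that the result follows ``in the same way as in \cite{JS}''. Your identification of the invariant projection $R=\tfrac12(1+\mathrm{sign}(D))\Pi_k$, of $\omega_k$ with the ergodic state $\omega_+^k$, and of the local Weyl law and Egorov property as the two required inputs is precisely the intended argument.
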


This shows that quantum ergodicity for the Dirac operators holds only after
taking the symmetry $\Pi_k$ into account.
The states $\omega_k$ differ for different $k$. Therefore, Dirac operators on a K\"ahler manifolds
of complex dimension greater than one are never quantum ergodic in the sense of \cite{JS}.

\bigskip


\begin{thebibliography}{BGV92}

\bibitem[B]{Ballmann} W. Ballmann,
{\it Lectures on K\"ahler Manifolds}. ESI Lectures in Mathematics and Physics.
July 2006

\bibitem[BGV]{BGV} N. Berline, E. Getzler, and M.  Vergne,
{\it Heat kernels and Dirac operators}. Grundlehren der
Mathematischen Wissenschaften  298. Springer-Verlag, Berlin, 1992.




\bibitem[BoG04]{BG04.1} J. Bolte and R. Glaser.
\newblock Zitterbewegung and semiclassical
observables for the Dirac equation.
\newblock {\em J. Phys. A} {\bf 37} (2004), no. 24, 6359--6373.

\bibitem[BoG04.2]{BG04.2} J. Bolte, R. Glaser.
\newblock A semiclassical Egorov theorem
and quantum ergodicity for matrix valued operators.
\newblock {\em Comm. Math. Phys.} 247 (2004), no. 2, 391--419.



\bibitem[Bor]{Borel} A. Borel.
\newblock Compact Clifford-Klein forms of symmetric spaces.
\newblock Topology 2, 111--122 (1963)



\bibitem[Br82]{Br82}
M. Brin.
\newblock Ergodic theory of frame flows.
\newblock In {\em Ergodic Theory and Dynamical Systems II,} Proc.
Spec. Year, Maryland 1979-80, Progr. Math. {\bf 21}, 163--183,
Birkh\"auser, Boston, 1982.

\bibitem[BrG80]{BrG}
M. Brin and M. Gromov.
\newblock On the ergodicity of frame flows.
\newblock {\em Inv. Math.} {\bf 60}, 1--7 (1980)


\bibitem[BrP74]{BrP} M.~I. Brin and  Ja.~B. Pesin.
{\em Partially hyperbolic dynamical systems.}
 Izv. Akad. Nauk SSSR Ser. Mat. {\bf 38} (1974), 170--212.


\bibitem[BH1]{BH1}  J. Br\"uning and E.
Heintze,  The asymptotic expansion of Minakshisundaram-Pleijel in
the equivariant case. Duke Math. J. 51 (1984), no. 4, 959--980.

\bibitem[BH2]{BH2} J. Br\"uning and E.
Heintze,   Representations of compact Lie groups and elliptic
operators. Invent. Math. 50 (1978/79), no. 2, 169--203.

\bibitem[BO06]{BO06} U. Bunke and M. Olbrich, Martin. On quantum ergodicity for vector
bundles. Acta Appl. Math. 90 (2006), no. 1-2, 19--41.

\bibitem[BuP03]{BuP}
K. Burns and M. Pollicott.
\newblock Stable ergodicity and frame flows.
\newblock {\em Geom. Dedicata}, {\bf 98}, 189--210 (2003)

\bibitem[CV85]{CdV}
Y. Colin de Verdi\`ere.
\newblock  Ergodicit\'e et fonctions propres du laplacien.
\newblock {\em  Comm. Math. Phys.}  {\bf 102}, 497--502, (1985)

\bibitem[FuHa91]{MR1153249}
W. Fulton and J. Harris.
\newblock {\em Representation theory}, volume~129 of {\em
  Graduate Texts in Mathematics}.
\newblock Springer Verlag, New York, 1991.

\bibitem[FrGrRe99]{FrGrRe99}
J. Froehlich, O. Grandjean and A. Recknagel.
\newblock Supersymmetric quantum theory and non-commutative geometry.
\newblock {\em Commun.Math.Phys.} {\bf  203}, 119--184,(1999)


\bibitem[GU]{GU} V. Guillemin, A. Uribe, Reduction and the trace formula,
J. Differential Geom. 32 (2) (1990) 315--347.

\bibitem[HMR]{HMR} B.
Helffer, A. Martinez and D. Robert. Ergodicit\'e et limite
semi-classique.  Comm. Math. Phys. 109 (1987), no. 2, 313--326.


\bibitem[HR]{HR} B. Helffer and D.  Robert,  \'Etude du spectre pour un op\'erateur
globalement elliptique dont le symbole de Weyl pr\'esente des
sym\'etries. II. Action des groupes de Lie compacts.  Amer. J. Math.
108 (1986), no. 4, 973--1000.

\bibitem[JS]{JS} D. Jakobson and A. Strohmaier.
\newblock High energy limits of Laplace-type and
Dirac-type eigenfunctions and frame flows. Comm. Math. Phys. 270
(2007), no. 3, 813--833. Announced in ERA-AMS 12 (2006), 87--94.

\bibitem[K]{K} A. W. Knapp, {\it Representation theory of semisimple groups. An overview based
on examples}.
 Reprint of the 1986 original. Princeton Landmarks in Mathematics. Princeton University Press,
 Princeton, NJ,
 2001.

\bibitem[MW]{MW} V. Mathai and S. Wu,  Equivariant holomorphic Morse inequalities. I.
Heat kernel proof. J. Differential Geom. 46 (1997), no. 1, 78--98.

\bibitem[Shn74]{Shn74}
A.~I. Shnirelman.
\newblock Ergodic properties of eigenfunctions.  (Russian) .
\newblock {\em  Uspehi Mat. Nauk}  {\bf 29}, 181--182, (1974).

\bibitem[Shn93]{Shn93} A.~I. Shnirelman.
\newblock On the asymptotic properties of eigenfunctions in the
regions of chaotic motion.
\newblock In V. Lazutkin {\em KAM theory and semiclassical
approximations to eigenfunctions.} Ergebnisse der Mathematik und
ihrer Grenzgebiete (3), 24. Springer-Verlag, Berlin, 1993.


\bibitem[TU]{TU} M.E.  Taylor and A. Uribe,  Semiclassical spectra of gauge fields.
J. Funct. Anal. 110 (1992), no. 1, 1--46.


\bibitem[W]{W}  R.O. Wells, {\it Differential analysis on complex manifolds}.
Second edition. Graduate Texts in Mathematics, 65. Springer-Verlag,
New York-Berlin, 1980


\bibitem[Zel87]{Zel87}
S. Zelditch.
\newblock Uniform distribution of eigenfunctions on compact hyperbolic surfaces.
\newblock {\em  Duke Math. J.} {\bf 55}, 919--941 (1987)

\bibitem[Zel90]{Zel90}
S. Zelditch.
\newblock On the generic spectrum of a riemannian cover.
\newblock {\em Annales de l'institut Fourier} {\bf 40} no. 2, 407--442, (1990)

\bibitem[Zel96]{MR1384146}
S. Zelditch.
\newblock Quantum ergodicity of $C^*$ dynamical systems.
\newblock {\em Comm. Math. Phys.} {\bf 177}, 507--528, (1996).



\end{thebibliography}
\end{document}